\newtheorem{thm}{Theorem}[section]
\newtheorem{lem}[thm]{Lemma}
\newtheorem{prop}[thm]{Proposition}
\newtheorem{cor}[thm]{Corollary}
\newtheorem{ex}[thm]{Example}
\newtheorem{rmk}[thm]{Remark}
\begin{document}

\title[Regularity results for classes of Hilbert C*-modules]{Regularity results for classes of Hilbert C*-modules with respect to special bounded modular functionals}
\author{Michael Frank}
\address{Hochschule f\"ur Technik, Wirtschaft und Kultur (HTWK) Leipzig, Fakult\"at Informatik und Medien, PF 301166, D-04251 Leipzig, Germany.}
\email{michael.frank@htwk-leipzig.de}

\subjclass{Primary 46L08; Secondary 46L05, 46H10, 47B48. }

\keywords{Hilbert C*-modules; monotone complete C*-algebras; W*-algebras; compact C*-algebras, extensions of zero modular functionals; kernels of non-selfadjoint module operators; (left) multiplier algebras}

\dedicatory{in the memory of Gert Kj{\ae}rg{\aa}rd Pedersen}

\maketitle

\begin{abstract}
Considering the deeper reasons of the appearance of a remarkable counterexample by J.~Kaad and M.~Skeide (2023) we consider situations in which two Hilbert C*-modules $M \subset N$ with $M^\bot = \{ 0 \}$ over a fixed C*-algebra $A$ of coefficients cannot be separated by a non-trivial bounded $A$-linear functional $r_0: N \to A$ vanishing on $M$. In other words, the uniqueness of extensions of the zero functional from $M$ to $N$ is focussed. We show this uniqueness of extension for any such pairs of Hilbert C*-modules over W*-algebras, over monotone complete C*-algebras and over compact C*-algebras. Moreover, uniqueness of extension takes place also for any one-sided maximal modular ideal of any C*-algebra. 
Such a non-zero separating bounded $A$-linear functional $r_0$ exist for a given pair of full Hilbert C*-modules $M \subseteq N$ over a given C*-algebra $A$ iff there exists a bounded $A$-linear non-adjointable operator $T_0: N \to N$ such that the kernel of $T_0$ is not biorthogonally closed w.r.t. $N$ and contains $M$. This is a new perspective on properties of bounded modular operators that might appear in Hilbert C*-module theory. By the way, we find a correct proof of Lemma 2.4 of M. Frank (2002) in the case of monotone complete and compact C*-algebras, but find it not valid for certain particular cases.
\end{abstract}

\section{Introduction}

The theory of Hilbert C*-modules exists for about 70 resp.~50 years since the famous works by I. Kaplansky, resp.~by W.~L.~Paschke and by M.~A.~Rieffel. Nevertheless, a new problem has been discovered which forces to review parts of the theory. Some years ago O.~M.~Shalit and B.~Solel investigated Hilbert subproduct systems of Hilbert product systems, cf.~\cite{Shalit_Solel,Shalit_Skeide}. One core question has been whether there might exist special bounded modular functionals on pairs of Hilbert C*-modules into the C*-algebra of their coefficients: Let $M\subset N$ be two Hilbert $A$-modules over a given C*-algebra $A$ such that the orthogonal complement of $M$ w.r.t. $N$ equals $\{ 0 \}$. Does there exist a non-trivial modular extension $r_0$ of the zero map from $M$ to $A$ to $N$? This question is the analogue of the categorical separation problem for similar pairs of linear spaces of quite different kinds by (sorts of) bounded linear functionals. We refer to J.~Kaad and M.~Skeide \cite{KS} and to V.~M.~Manuilov \cite{M} for initial investigations.

Considering Hilbert spaces and their Hilbert subspaces as a class of examples, there seems to be not any such problem. To indicate one more complicated background of the problem consider maximal one-sided (say, right) norm-closed ideals $D$ of unital C*-algebras $A$. Both $A$ and $D \subset A$ can be considered as (right) Hilbert $A$-modules inducing the necessary algebraic structures from the C*-structures of $A$ in the usual way. Intuitively, for C*-algebras $A$ without finite part, $aD = 0$ for some $a \in A$ should force $a=0$, so the zero functional on $D$ would have only the zero modular functional on $A$ as its continuation, or alternatively in the opposite case, $D$ is even a direct orthogonal summand of $A$ as for matrix algebras or for atomic carrier projections from an occasionally existing finite part of $A$.  However, a proof even for modular maximal right ideals $D$ of $A$ will need a deep dive into C*-theory as will be shown in the last section. Note, that the example of $A=C([0,1])$ and $D=C_0((0,1])$ of all continuous functions on the unit interval and of the subset of functions vanishing at zero would yield the Banach algebra of all bounded continuous functions $D'=C_b((0,1]) \supset A$ on the half-open unit interval $(0,1]$  as the dual Banach $A$-module $D'$ of $D$. So the problem of the extension of the zero functional on $D$ reaches beyond the hosting selfdual Hilbert $A$-module $A$ in this case. Moreover, $D$ coincides with its bidual Banach $A$-module $D''$ in this case, another special situation. However, turning to arbitrary norm-closed ideals of C*-algebras one has to struggle with the defiencies of noncommutative topology.

\begin{ex}
{\rm 
Let $X=[0,1]$ be the unit interval of real numbers equipped with the usual metric topology arising from the absolute value of the difference of two numbers. Consider the set $J$ of all bounded Borel functions $f$ on $X$ such that the set $\{ x \in X :f(x) \not= 0 \}$ is meager. The set $J$ is a two-sided norm-closed ideal in the monotone $\sigma$-complete C*-algebra of all bounded Borel functions on $X$ with its supremum norm. The C*-algebra $A=D(X)$ constructed as the quotient algebra of the latter by $J$ is known as the Dixmier algebra on $X$. It is a monotone complete C*-algebra (hence, AW*) and, as a commutative AW*-algebra, also injective, in fact the injective envelope $I(C(X))$ of the C*-algebra C(X), cf. \cite{Dixmier,SaitoWright_2015,Hamana79,Gonshor}. 

Consider the Hilbert $A$-module $M=l_2(A)$ of all sequences of elements $a_i \in A$ such that the series $\sum_i |a_i|^2$ converges in norm. This module $M$ is countably generated over $A$ by its the orthonormal basis $\{ e_i = (0, ... ,0,1_A^{(i)},0, ...) \}$. The $A$-dual Banach $A$-module $l_2(A)'$ of the Hilbert $A$-module $l_2(A)$ can be identified with the set of sequences of elements $a_i \in A$  such that the sequence of partial sums $\{ \sum_{i=1}^k |a_i|^2 \}_k$ is bounded by a sequence-specific constant from above. The norm of each sequence is derived from the respective least upper bound. 
Note, that these series are well-defined since $A$ is monotone complete, the respective sequence of partial sums $\{ \sum_{i=1}^k |a_i|^2 \}_k$ is norm-bounded, positive and monotone increasing in $A$, what makes the supremum exist as a positive element of $A$. So one can define an $A$-valued inner product on $l_2(A)'$ setting $\langle \{a_i\}_i, \{ a_i \}_i \rangle := \sum_i |a_i|^2$, applying the polarization formula to count the values  $\langle x,y \rangle = \frac{1}{4}  \sum_{k=0}^3 {\bf i}^k \langle x+ {\bf i}^k y, x+ {\bf i}^k y \rangle$ for $x,y \in l_2(A)'$. The Banach $A$-module $N=l_2(A)'$ turns into a self-dual Hilbert $A$-module with an isometrically embedded copy of $M=l_2(A)$. Note, that for any element $f=(a_1, ..., a_i, ...)$ of $l_2(A)'$ one has $f(e_i)=a_i$ for any index $i$.  Also, the orthogonal complement of $l_2(A)$ in $l_2(A)'$ is $\{ 0 \}$. Consequently, every bounded $A$-linear functional $f_0$ on $l_2(A)$ that vanishes on $l_2(A) \subset l_2(A)'$ should be represented by the zero sequence in $l_2(A)'$ and, therefore, has an $A$-valued inner product value $0_A$, i.e. $f_0$ has to be the zero functional.

This shows that the arguments given by V.~M.~Manuilov in Lemma 11 and Corollary 12 of \cite{M} need a thorough revision. The example can be repeated for any compact Hausdorff space $X$ and the injective envelope  $I(C(X))$, cf.~\cite{Gonshor}, as well for any Hilbert $A$-module $M$ and its $A$-dual Banach $A$-module $N=M'$ over monotone complete C*-algebras $A$, cf.~\cite{Hamana92}.
}
\end{ex}

The question of non-trivial modular extensions of the zero functional on the named class of pairs of Hilbert C*-modules is closely related to the problem, whether there are Hilbert C*-modules and bounded C*-linear operators between them, the kernels of which are not biorthogonally complemented, or not. The latter question was investigated by J.~Kaad and M.~Skeide in \cite{KS} in 2021 giving a class of examples with this new phenomenon. Also, V.~M.~Manuilov considered this circle of problems in \cite{M}. Therefore, we cannot hope simply to extend our understanding of the Hilbert space situation and of the maximal norm-closed ideal type examples to, for example, all Hilbert C*-modules over monotone complete C*-algebras. We need a separate investigation which is done for Hilbert C*-modules over W*-algebras, over monotone complete C*-algebras and over compact C*-algebras in the present paper. By the way, we find a correct proof of \cite[Lemma 2.4]{F_2002} in the monotone complete and compact C*-case, but disproved it for certain C*-algebras, cf.~\cite{KS}. Generally speaking, for these classes of Hilbert C*-modules the discussed situation is pretty much similar to that one of the class of Hilbert spaces. In the present paper we do not cite any fact from \cite{F_2002} to avoid any influence from the unproven \cite[Lemma 2.4]{F_2002} on the present explanations.

\section{Some basic definitions and facts}

Our basic references for facts on Hilbert C*-modules are \cite{BG_2002,Paschke,Frank_1990,F_1995,Wegge-Olsen,BMSh_1994,Lance_95,F_1995,Li_10} and others.
To start with, we give some basic definitions and arguments to introduce to the circle of problems treated. The basic structures are Hilbert C*-modules, i.e.~(non-unital, in general) C*-algebras $A$ and (right, Banach) $A$-modules $M$ such that there exist an $A$-valued inner product $\langle .,. \rangle: M \times M \to A$ compatible with both the complex structures on $A$ and on $M$ such that 
\begin{enumerate}
\item $\langle z, x a+y \rangle = \langle z,x \rangle a + \langle z,y \rangle$ for any $a \in A$, $x,y,z \in M$,
\item $\langle x,y \rangle = \langle y,x \rangle^*$ for any $x,y \in M$,
\item $\langle x,x \rangle \geq 0$ for any $x \in M$,
\item $\langle x,x \rangle =0$ iff $x=0$ in $M$.
\end{enumerate}
That is, $\langle .,. \rangle$ is a conjugate $A$-bilinear mapping. We treat only modules that are complete with respect to the derived norm $\|x\| := \|\langle x,x \rangle \|^{1/2}_A$ for $x \in M$. But, this is not precisely enough. We have to consider Hilbert C*-modules always as pairs of a module and of its C*-valued inner product, cf.~\cite{Frank_1999} and Lemma \ref{lemma_basic2} below. 

Standard examples are Hilbert spaces over $\mathbb C$ or norm-closed right ideals of C*-algebras $A$. Often the free projective modules $A^n$ of all $n$-tuples of elements of $A$ or the standard countably generated $A$-module $l_2(A)$ of all sequences of elements of $A$ for which the respective inner product series converge in norm are considered, cf.~the Swan-Serre and Kasparov theorems. Generally speaking, full Hilbert C*-modules $M$ over some C*-algebra $A$ are at the same time (left) Banach modules over their C*-algebra ${\rm K}_A(M)$ of ''compact'' module operators over them. Moreover, this relation is symmetric since there exists a ${\rm K}_A(M)$-valued inner product on $M$ inducing the same norm, and ${\rm K}_{{\rm K}_A(M)}(M)$ equals to $A$. This leads to (strong) Morita equivalence of C*-algebras. 

However, Hilbert C*-modules in particular examples can be much more different in usually supposed ''good'' properties. Common knowledge is the sometimes missing self-duality of Hilbert C*-modules and the possible non-adjointability of some bounded module operators. Less known is the possible existence of two (or more) C*-valued inner products on certain Hilbert C*-modules inducing equivalent norms on them, but which are not unitarily equivalent or similar via a bounded adjointable (or even modular) invertible operator, what may turn the properties of a bounded module operator to be ''compact'' or to be adjointable into a relative property. Thinking further, modularly generating sets of elements are also affected in their possible property to be a standard modular frame by this effect (cf.~\cite[Cor.~6.6]{FL}), but there exist quite regularly Hilbert C*-modules that do not admit even this weak form of a ''nice'' generating set, not talking about kinds of orthonormal bases, cf.~\cite{Li_10,Asadi_et_al}.  So, the point is to identify classes of ''good'' Hilbert C*-modules. Candidates are the class of Hilbert C*-modules over compact C*-algebras, i.e.~C*-algebras that admit a faithful $*$-representation in some C*-algebra of (all) compact operators on a Hilbert space. Another good choice are the classes of Hilbert C*-modules over von Neumann (i.e.~W*-)algebras or over monotone complete C*-algebras. We are going to obtain further facts for these classes adding more evidence. 

The following fact is non-obvious, but very useful for insights: Let $N$ be a Hilbert C*-module over a C*-algebra $A$. Denote by $\langle N,N \rangle$ the norm-closed $A$-linear hull of all $A$-valued inner product values of elements of $N$ in $A$. Moreover, the sets $N$ and $AN$ coincide and any element $n \in N$ can be represented as $n=a x$ for certain elements $a \in A$, $x \in N$ by the Cohen-Hewitt factorization theorem. This fact has been mentioned by several authors in the context of Banach C*-modules and in more general contexts, cf. e.g.~\cite[Thm.~4.1]{Pedersen_1998} or \cite[Thm.~32.22]{Hewitt_Ross_1970}, also \cite[Thm.~II.5.3.7]{Blackadar}. This is one reason why Hilbert $A$-modules $N$ can be considered as Hilbert ${\mathbb M}(A)$-modules over the multiplier algebras ${\mathbb M}(A)$ of $A$, similarly as Banach $A$-modules over the (right) multiplier algebra ${\mathbb R}{\mathbb M}(A)$ of $A$. Recalling Morita equivalence and the symmetry of both the module actions, $N$ is also a Hilbert ${\rm End}^*_A(N)$-module over the C*-algebra of all bounded adjointable operators ${\rm End}^*_A(N) = {\mathbb M}({\rm K}_A(N))$, as well as a Banach ${\rm End}_A(N)$-module over the Banach algebra ${\rm End}_A(M) = {\mathbb L}{\mathbb M}({\rm K}_A(N))$, cf.~\cite{Green_1978,Kasp_1980,Lin_1992}. 

Note, that we consider $A$-dual Banach $A$-modules $N'$ of Hilbert $A$-modules $N$ as right $A$-modules, too, defining $ra$ for (right $A$-linear elements) $r \in N'$ and $a \in A$ by $(ra)(x):= a^* \cdot r(x)$ for any $x \in N$, cf.~\cite[p.~450]{Paschke}. Whenever $A$-valued inner products can be extended from $N$ to $N'$ this simplifies the isometric embedding of $N$ into $N'$ treating elements of both these modules in the same way in formulae. 


\section{Extension of $C^*$-linear functionals: the monotone complete C*-case}

In this section we investigate the question whether the zero bounded C*-linear maps of Hilbert C*-submodules over monotone complete C*-algebras $A$ to their C*-algebra of coefficients could be continued by a non-zero bounded C*-linear functional on the hosting Hilbert C*-module over the same monotone complete C*-algebra in case the orthogonal complement of the submodule is trivial. We treat the cases of W*-algebras and of monotone complete C*-algebras as C*-algebras simultaneously despite of the different spheres of application and partially different techniques. This approach should help readers without deeper knowledge on non-W*, monotone complete C*-algebras to understand the arguments.

We need some intrinsic characterization of selfduality of Hilbert C*-modules over monotone complete C*-algebras. In \cite{F_1995} some kind of order type convergence in such Hilbert C*-modules has been introduced based on order convergence in monotone complete C*-algebras (cf.~\cite{KP},  \cite[Section 1]{Hamana82}). Note, that order convergence in monotone complete C*-algebras (denoted by ${\rm LIM}$ in the sequel) is not supported by any locally convex Hausdorff topology that preserves all algebraic C*-algebra structures, generally speaking (cf.  \cite{Floyd}, \cite[Remark on p. 67]{F_1995}). But, for W*-algebras the w*-topology supports order convergence. Let $A$ be a monotone complete C*-algebra, $M$ be a Hilbert $A$-module and $I$ be a net for indexing. A norm-bounded set $\{ x_\alpha :\alpha \in I \}$ of elements of $M$ is fundamental in the sense of $\tau_2 ^o$-convergence iff the limits ${\rm LIM} \{y , x_\alpha - x_\beta \rangle: \alpha \in I\}$ exist for every $\beta \in I$, any $y \in M$, and the limits ${\rm LIM} \{ {\rm LIM} \{\langle y , x_\alpha - x_\beta \rangle: \alpha \in I\} : \beta \in I\}$ exist for any $y \in M$, too, and equal to zero. Such a set has the $\tau_2^o$-limit $x \in M$ iff the limits ${\rm LIM} \{y , x_\alpha - x \rangle: \alpha \in I\}$ exist for any $y \in M$ and equal to zero. This $\tau_2^o$-convergence respects the module structures and preserves norm-bounded balls, cf. \cite[Def.~2.3, Lemma 2.4]{F_1995}. Considering the W*-case a Hilbert W*-module $M$ is self-dual iff its unit ball is complete with respect to the topology generated by the set $\{ f(\langle x,. \rangle) : f \in A_{*}, x \in M \}$. This set generates the w*-topology on the $A$-dual Hilbert $A$-module $M'$ of $M$, cf.~\cite[Prop.~3.8, Remark 3.9]{Paschke}, \cite[Def.~3.1, Thm.~3.2]{Frank_1990}.


\begin{lem} \label{lemma_basic2}
Let $A$ be a C*-algebra and $M \subseteq N$ be two full Hilbert $A$-modules. Suppose that $M \subseteq N$ has the orthogonal complement $M^\bot_N=\{0\}$ with respect to $N$. Then:
\begin{enumerate}
\item Two different elements $n_1, n_2 \in N$ restricted to $M \subseteq N$ realize pairwise distinct bounded $A$-linear functionals $<n_1, . >$, $<n_2, .>$ on $M$.  
\item In case of the existence of a non-zero bounded $A$-linear functional $r_0: N \to A$ such that $r_0$ vanishes on $M$, the entire module $\{ r_0a : a \in A \}$ as well as its norm-closure in $N'$ represents the zero functional on $M$. So $r_0$ cannot be represented by an element of $N$ via the $A$-valued inner product on $N$ by supposition.
\item Let $r_0$ as in (ii). The Hilbert $A$-module $N$ admits (at least) two $A$-valued inner products inducing equivalent norms, whose reductions to $M$ coincide. The more, these two $A$-valued inner products are not equivalent, i.e. $\langle T(.),. \rangle^{(1)} \not \equiv \langle .,. \rangle^{(2)}$ for any positive w.r.t. $\langle .,.\rangle^{(1)}$, bounded bijective $A$-linear operator $T$ on $N$. 
\item Given the situation at (iii), the notions of bounded module operators on $N$ to be ''compact'' or to be adjointable w.r.t. one of these two named $A$-valued inner products depend on the choice of the $A$-valued inner product on $N$ with identical reductions to $M$.
\end{enumerate}
\end{lem}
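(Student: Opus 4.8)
The plan is to treat the four assertions in order, exploiting throughout the single structural fact that $M^\bot_N=\{0\}$ turns every $A$-linear functional that is \emph{represented by an element of $N$} and \emph{vanishes on $M$} into the zero functional. For (i), if $\langle n_1,m\rangle=\langle n_2,m\rangle$ for all $m\in M$, then $n_1-n_2$ is orthogonal to $M$, hence lies in $M^\bot_N=\{0\}$, so $n_1=n_2$; this is injectivity of the restriction map $N\to M'$. For (ii), I would note that in Paschke's dual module $N'$ the right action is $(r_0 a)(x)=a^{*}r_0(x)$, so each $r_0a$ and, by continuity of the norm, each element of the norm-closure of $\{r_0a:a\in A\}$ still annihilates $M$; moreover, if $r_0=\langle n_0,\cdot\rangle$ for some $n_0\in N$, then $r_0|_M=0$ would give $n_0\in M^\bot_N=\{0\}$ and hence $r_0=0$, contradicting $r_0\neq0$. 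Throughout (ii)--(iv) I work under the standing hypothesis that such a non-zero separating $r_0$ exists.

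For the first half of (iii) I would simply exhibit the perturbed inner product
\[
\langle x,y\rangle^{(2)}:=\langle x,y\rangle^{(1)}+r_0(x)^{*}r_0(y),
\]
writing $\langle\cdot,\cdot\rangle^{(1)}$ for the given one. Conjugate bilinearity and right $A$-linearity are immediate from those of $r_0$; positivity and definiteness follow because both summands are positive and $\langle x,x\rangle^{(2)}=0$ forces $\langle x,x\rangle^{(1)}=0$. The estimate $\|x\|_1^2\le\|x\|_2^2\le(1+\|r_0\|^2)\|x\|_1^2$ (the lower bound since $a\le a+b$ for positive $b$) shows the two norms are equivalent, so $N$ is complete and hence a Hilbert $A$-module for $\langle\cdot,\cdot\rangle^{(2)}$ as well; and for $x,y\in M$ the correction term vanishes, so the two inner products restrict identically to $M$.

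The inequivalence $\langle T(\cdot),\cdot\rangle^{(1)}\not\equiv\langle\cdot,\cdot\rangle^{(2)}$ I would prove by contradiction, and this is where the $M^\bot_N=\{0\}$ principle does the real work. Suppose a bounded bijective $A$-linear $T$ on $N$, positive with respect to $\langle\cdot,\cdot\rangle^{(1)}$, satisfies $\langle Tx,y\rangle^{(1)}=\langle x,y\rangle^{(2)}$ for all $x,y$. Put $Sx:=Tx-x\in N$; then $\langle Sx,y\rangle^{(1)}=r_0(x)^{*}r_0(y)$ for all $x,y$. Fixing $x$, the functional $y\mapsto\langle Sx,y\rangle^{(1)}$ is represented by $Sx\in N$ and vanishes on $M$, so by (i) it forces $Sx\in M^\bot_N=\{0\}$. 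Hence $S=0$, so $r_0(x)^{*}r_0(y)\equiv0$; taking $y=x$ gives $\|r_0(x)\|^2=0$ for every $x$, i.e.\ $r_0=0$, contradicting $r_0\neq0$. It is worth remarking that positivity, adjointability and bijectivity of $T$ are not even needed for this contradiction.

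Finally, for (iv) I would exhibit a single operator whose ``compact''/adjointable status changes with the inner product. Choosing $\xi\in N$ with $r_0(\xi)\neq0$, set $p:=r_0(\xi)^{*}r_0(\xi)\neq0$ and, using fullness of $N$, pick $\zeta\in N$ with $\zeta p\neq0$ (such $\zeta$ exists, since $\zeta p=0$ for all $\zeta$ would give $Np=0$, whence $Ap=0$ and $p=0$). I would then take $\Theta:=\theta^{(2)}_{\zeta,\xi}$, $\Theta(x)=\zeta\langle\xi,x\rangle^{(2)}$, which is by construction ``compact'' and adjointable with respect to $\langle\cdot,\cdot\rangle^{(2)}$. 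Expanding the correction term gives $\Theta=\theta^{(1)}_{\zeta,\xi}+Q$ with $Q(x)=\big(\zeta\,r_0(\xi)^{*}\big)r_0(x)$; since $\theta^{(1)}_{\zeta,\xi}$ is adjointable for $\langle\cdot,\cdot\rangle^{(1)}$, it suffices to show $Q$ is \emph{not}. An adjoint of $Q$ would require, for each $y$, the left multiple $\langle y,\zeta r_0(\xi)^{*}\rangle^{(1)}\,r_0$ of $r_0$ to be represented in $N$; being vanishing on $M$, any representing element lies in $M^\bot_N=\{0\}$, so such a left multiple is representable only if it is zero. The \textbf{main obstacle} is precisely to produce a witness $y$ for which this left multiple is non-zero. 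I expect to settle it by observing that $\langle y,\zeta r_0(\xi)^{*}\rangle^{(1)} r_0\equiv0$ for all $y$ is equivalent to $\zeta r_0(\xi)^{*}r_0(z)=0$ for all $z$, while our choice already gives $\zeta r_0(\xi)^{*}r_0(\xi)=\zeta p\neq0$; hence some $y$ witnesses non-representability, $Q$ (and thus $\Theta$) is non-adjointable for $\langle\cdot,\cdot\rangle^{(1)}$, and the relativity of the notions ``compact'' and ``adjointable'' is established. Thus the delicate point of the whole lemma is this non-degeneracy bookkeeping in (iv), which fullness of $N$ resolves.
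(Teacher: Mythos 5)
Your proofs of (i)--(iii) are correct and follow essentially the same route as the paper: the restriction map $N\to M'$ is injective because $M^\bot_N=\{0\}$; a representing element of a functional vanishing on $M$ would itself lie in $M^\bot_N$; and the perturbed product $\langle\cdot,\cdot\rangle^{(1)}+r_0(\cdot)^*r_0(\cdot)$ with Paschke's inequality $r_0(x)^*r_0(x)\le\|r_0\|^2\langle x,x\rangle^{(1)}$ gives the norm equivalence, while the assumed identity $\langle T(\cdot),\cdot\rangle^{(1)}\equiv\langle\cdot,\cdot\rangle^{(2)}$ forces $r_0(x)^*r_0(\cdot)$ to be represented in $N$ and hence to vanish, contradicting $r_0\neq 0$. (Your remark that positivity, adjointability and bijectivity of $T$ play no role is accurate; the paper's detour through the operator $S$ with $\langle S(\cdot),S(\cdot)\rangle^{(1)}\equiv\langle\cdot,\cdot\rangle^{(2)}$ is likewise inessential.) Where you genuinely diverge is (iv): the paper disposes of it in one line by citing \cite[Thm.~3.7, Prop.~5.3]{Frank_1999}, whereas you construct an explicit witness, the rank-one operator $\Theta=\theta^{(2)}_{\zeta,\xi}$, which is ''compact'' and adjointable for $\langle\cdot,\cdot\rangle^{(2)}$ but whose $\langle\cdot,\cdot\rangle^{(1)}$-correction $Q(x)=\zeta r_0(\xi)^*r_0(x)$ cannot be adjointable, since any adjoint would represent the functionals $\langle y,\zeta r_0(\xi)^*\rangle^{(1)}r_0(\cdot)$ by elements of $N$, forcing them to vanish, against $\zeta r_0(\xi)^*r_0(\xi)\neq 0$; non-adjointability then also rules out ''compactness'' for $\langle\cdot,\cdot\rangle^{(1)}$. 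This argument is sound, and it buys self-containedness: the lemma no longer rests on external structure theory, and it exhibits concretely which operator changes status. One small simplification: your non-degeneracy step does not actually need fullness of $N$ — taking $\zeta=\xi$ always works, because $\xi p=0$ with $p=r_0(\xi)^*r_0(\xi)$ would give $p\langle\xi,\xi\rangle p=0$, hence $p^3\le\|r_0\|^2\,p\langle\xi,\xi\rangle p=0$ and $p=0$, a contradiction. The paper's citation-based proof, by contrast, is shorter and situates the phenomenon inside the general theory of non-unitarily-equivalent inner products, but hides the mechanism your construction makes visible.
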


\begin{proof}
To see the assertion (i) select $n_1, n_2 \in N$ and consider the difference $n_1-n_2$. Suppose, both $n_1$ and $n_2$ induce the same bounded $A$-linear map from $M$ to $A$ taking $<n_i,.>$ and reducing it to $M\subseteq N$. Then $n_1-n_2=0$ by supposition, i.e. $n_1=n_2$ in $N$ follows. 

In case there exists an element $a \in A$ such that $r_0a \in N$ and $r_0a \not= 0$ we would have a non-zero element of $N$ giving a zero $A$-valued functional on $M$ via $\langle r_0a,. \rangle$, a contradiction to the supposition.

Now, set $\langle .,. \rangle^{(2)} := \langle .,. \rangle^{(1)} + r_0^*(.)r_0(.)$. Obviously, it is an $A$-valued inner product on $N$ whose reduction to $M$ gives the initial $A$-valued inner product back. By general operator inequalities (\cite[Prop.~2.6]{Paschke}) we have
\[
\langle n,n \rangle^{(1)} \leq \langle n,n \rangle^{(2)} \leq (1+\|r_0\|^2) \langle n,n \rangle^{(1)} \, .
\]
This shows the equivalence of the induced norms on $N$. Now, suppose $\langle T(.),. \rangle^{(1)} \equiv \langle .,. \rangle^{(2)}$ for some positive w.r.t. $\langle .,. \rangle^{(1)}$,  bounded bijective $A$-linear operator $T$ on $N$. (This is equivalent to the existence of some adjointable  w.r.t. $\langle .,. \rangle^{(1)}$, bounded bijective operator $S$ on $N$ with $\langle S(.),S(.) \rangle^{(1)} \equiv \langle .,. \rangle^{(2)}$.) We obtain the functional equality $\langle (T-{\rm id}_N)(n), . \rangle^{(1)} = r_0^*(n)r_0(.)$ for any $n \in N$. Therefore, the bounded $A$-linear functional $r_0^*(n)r_0(.) \in N'$ is represented by a non-zero element of $N$ for any fixed $n \in N$. This contradicts assertion (ii) since there are non-zero functionals in this set $\{ r_0^*(n)r_0(.) ; n \in N \}$ by supposition. 

The last assertion follows from \cite[Thm.~3.7, Prop.~5.3]{Frank_1999}.
\end{proof}


\begin{lem} \label{lemma_basic}
Let $A$ be a monotone complete C*-algebra and $M \subseteq N$ be two Hilbert $A$-modules. Suppose that $M \subseteq N$ has the orthogonal complement $M^\bot_N=\{0\}$ with respect to $N$. Then:
\begin{enumerate}
\item The C*-algebras $\langle M,M \rangle$ and $\langle N,N \rangle$ in $A$ have the same central carrier projection $p \in A$, i.e. both their annihilators with respect to $A$ equal to \linebreak[4] 
$(1-p)A$ with $p \in Z(A)$. Obviously, both $\langle M,M \rangle$ and $\langle N,N \rangle$ are two-sided norm-closed ideals of the monotone complete C*-algebra $pA$, as well as $\langle M,M \rangle$ is a two-sided ideal of $\langle N,N \rangle \subseteq pA$. 
\item The multiplier C*-algebras of their centers equal to $pZ(A)$, so the unitizations of $\langle M,M \rangle$ and of $\langle N,N \rangle$ inside $pA$ share the identity element $p$ of $pZ(A) \subseteq pA$ as their respective identities.  
\item The multiplier C*-algebras of both $\langle M,M \rangle$ and $\langle N,N \rangle$ equal to $pA$, so the centers of their multiplier C*-algebras both equal to $pZ(A)$, too. (In general, $Z({\mathbb M}(A))$ can be larger than ${\mathbb M}(Z(A))$.) So the two C*-algebras $\langle M,M \rangle$ and $\langle N,N \rangle$ share the identity element $p \in pZ(A)$ of their unitizations even in this sense. 
\item The centers of the C*-algebras $End_A^*(M)$ and $End_A^*(N)$ of all (bounded) adjointable $A$-linear operators on $M$ and on $N$, respectively, are isometrically $*$-isomorphic to $pZ(A) \subseteq pA$.
\end{enumerate}
\end{lem}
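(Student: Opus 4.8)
The plan is to prove the four assertions in turn, isolating the only genuine use of the hypothesis $M^\bot_N=\{0\}$ in (i) and treating (ii)--(iv) as structural consequences that exploit monotone completeness and strong Morita equivalence.

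For (i) I would first recall that a monotone complete C*-algebra is in particular an AW*-algebra, so the (left${}={}$right) annihilator of any norm-closed two-sided ideal $J\subseteq A$ has the form $(1-p_J)A$ for a central projection $p_J\in Z(A)$, its central carrier, and that $j\,p_J=j$ for $j\in J$; transported through the inner product this gives $m\,p_{\langle M,M\rangle}=m$ for $m\in M$ and likewise for $N$. Since $\langle M,M\rangle\subseteq\langle N,N\rangle$ one has $p_M\le p_N$. The crux is the reverse inequality. Put $q:=p_N-p_M$ and suppose $q\neq 0$. For arbitrary $m\in M,\ n\in N$ I would compute $\langle m,nq\rangle=\langle m,n\rangle q=p_M\langle m,n\rangle q=\langle m,n\rangle\,p_Mq=0$, using $m=mp_M$ and $p_Mq=0$. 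Hence $nq\in M^\bot_N=\{0\}$ for every $n$, so $Nq=\{0\}$ and therefore $\langle N,N\rangle q=\langle N,Nq\rangle=\{0\}$, i.e.\ $q\le 1-p_N$; together with $q\le p_N$ this forces $q=0$, a contradiction. Thus $p_M=p_N=:p$, and the remaining clauses of (i) are then immediate, $\langle M,M\rangle$ and $\langle N,N\rangle$ being norm-closed two-sided ideals of the monotone complete corner $pA$ with $\langle M,M\rangle$ an ideal of $\langle N,N\rangle$.

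For (ii) and (iii) I would isolate the general fact that, for a norm-closed two-sided ideal $J$ of a monotone complete C*-algebra with central carrier $p$, one has ${\mathbb M}(J)=pA$ and hence $Z({\mathbb M}(J))=Z(pA)=pZ(A)$, a commutative monotone complete algebra with unit $p$; both items then amount to identifying this pertinent center as $pZ(A)$, whose unit $p$ serves as the common identity of the unitizations inside $pA$. The inclusion $pA\subseteq{\mathbb M}(J)$ holds because $J$ is essential in $pA$ (if $x\in pA$ annihilates $J$ then $x\in(1-p)A\cap pA=\{0\}$). For the reverse inclusion monotone completeness is decisive: the canonical increasing approximate unit $(e_\lambda)$ of $J$ is bounded by $p$, so $\sup_\lambda e_\lambda$ exists in $pA$, is a projection acting as the identity on $J$, and by the essentiality argument equals $p$; for positive $T\in{\mathbb M}(J)$ the net $T^{1/2}e_\lambda T^{1/2}\in J$ increases with supremum some $t\in pA$, and one checks $T(j)=tj$ for all $j\in J$, after which the general case follows by decomposition. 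This yields ${\mathbb M}(J)=pA$ and $Z({\mathbb M}(J))=pZ(A)$ for both $J=\langle M,M\rangle$ and $J=\langle N,N\rangle$.

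For (iv) I would invoke the structural facts recalled in the introduction: $M$ is a full Hilbert $\langle M,M\rangle$-module, ${\rm End}^*_A(M)={\mathbb M}({\rm K}_A(M))$, and the symmetry of the two module actions makes $M$ an imprimitivity bimodule implementing a strong Morita equivalence between ${\rm K}_A(M)$ and $\langle M,M\rangle$. Since $\mathrm{Prim}$ is a Morita invariant, the Dauns--Hofmann identification of $Z({\mathbb M}(\cdot))$ with bounded continuous functions on the primitive ideal space gives isometric $*$-isomorphisms
\[
Z({\rm End}^*_A(M))=Z({\mathbb M}({\rm K}_A(M)))\cong Z({\mathbb M}(\langle M,M\rangle))=pZ(A),
\]
and the identical computation for $N$ yields $Z({\rm End}^*_A(N))\cong pZ(A)$, proving (iv).

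I expect the main obstacle to be the identification ${\mathbb M}(J)=pA$ underlying (ii) and (iii): verifying that the supremum of the approximate unit is precisely the central carrier and that every multiplier is realized by an element of $pA$ is exactly where monotone completeness is indispensable and where the naive norm-approximate-unit argument valid for general C*-algebras fails, matching the remark in the abstract. A secondary point requiring care is the isometric invocation of Morita invariance of $Z({\mathbb M}(\cdot))$ in (iv), since the modules are not assumed full over $A$ and the relevant equivalence is with the proper ideal $\langle M,M\rangle$ rather than with $A$ itself.
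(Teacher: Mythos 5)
Your part (i) is correct and is essentially the paper's own argument: both rest on the AW*-fact (Sait\^o--Wright) that the annihilator of a norm-closed two-sided ideal is $(1-p)A$ with $p$ central, and both force $p_M=p_N$ by observing that $(p_N-p_M)N$ is orthogonal to $M$, hence zero; you merely execute the inner-product computation in more detail. Your part (iv) is also sound and close in spirit to the paper's: the paper swaps the roles of $\langle M,M\rangle$ and ${\rm K}_A(M)$ in the equivalence bimodule $M$ and then appeals to (ii)/(iii), while you run the same Morita equivalence through the Rieffel homeomorphism of primitive ideal spaces and Dauns--Hofmann; both hinge on the Green/Kasparov identification ${\rm End}^*_A(M)={\mathbb M}({\rm K}_A(M))$, and your formulation is, if anything, cleaner.

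The problems are in (ii)--(iii). First, a genuine gap: you have conflated the two items. Item (iii) concerns the center of the multiplier algebra, $Z({\mathbb M}(J))$, and that is what your argument aims at. Item (ii) concerns the multiplier algebra of the center, ${\mathbb M}(Z(\langle M,M\rangle))$, an object the paper explicitly distinguishes from $Z({\mathbb M}(\cdot))$ (see its parenthetical remark in (iii)). The paper proves (ii) separately, by applying Pedersen's AW*-multiplier theorem inside the commutative monotone complete algebra $pZ(A)$ to the subalgebras $Z(\langle M,M\rangle)\subseteq Z(\langle N,N\rangle)\subseteq pZ(A)$; your proposal never touches ${\mathbb M}(Z(J))$, so (ii) as stated is simply not proved. (This item is genuinely delicate: the cited theorem needs $Z(J)$ to have zero annihilator in $pZ(A)$, a hypothesis not guaranteed by anything in your (i) --- an ideal can have very small, even trivial, center.) Second, where the paper disposes of (iii) by citing Pedersen's theorem (every two-sided multiplier of a C*-subalgebra with zero annihilator in an AW*-algebra is implemented by an element of that AW*-algebra, so ${\mathbb M}(J)=pA$ by the ideal property), you attempt a direct construction via monotone completeness. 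The skeleton is plausible, and your identification $\sup_\lambda e_\lambda=p$ can indeed be made rigorous using only order-preservation of $x\mapsto j^*xj$. But the pivotal step ``one checks $T(j)=tj$'' is exactly where the real work lies and is left unsupported: $t=\sup_\lambda T^{1/2}e_\lambda T^{1/2}$ is an order limit while $T^{1/2}e_\lambda T^{1/2}j\to T(j)$ is a norm limit, and to identify the two you need (a) that conjugation by a fixed element preserves suprema/infima of bounded monotone nets in a monotone complete C*-algebra, and (b) an argument reconciling order limits with norm limits (e.g.\ a decreasing net with infimum $0$ that norm-converges must converge to $0$), followed by polarization and the annihilator property to cancel $j$. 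These are precisely the ``order convergence is not topological'' subtleties the paper warns about (Floyd's example); they are true but must be proved or cited, and the paper's route through Pedersen's theorem avoids them entirely.
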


\begin{proof}
By \cite[Prop.~8.2.2]{SaitoWright_2015} every subset $S$ of an AW*-algebra $A$ admits a right and a left annihilator set of the form $p_rA $ and $Ap_l $ where $p_l, p_r \in A$ are orthogonal projections. If $S$ is a two-sided (norm-closed) ideal of $A$, then $p_l=p_r $ is a central projection in $A$. Whenever the central supports $(1-p_M)$ and $(1-p_N)$ of $\langle M,M \rangle$ and of $\langle N,N \rangle$, respectively, would be different, the subset $(p_M-p_N)A$ would act non-trivially on $N$ and trivially on $M$. So, the subset $(p_M-p_N)N$ would be orthogonal to $M \subseteq N$, but it has to consist of the zero element only by supposition. This is only possible, if $p_M=p_N$. The set $\langle M,M \rangle$ is obviously a subset and a C*-subalgebra of the C*-algebra $\langle N,N \rangle \subseteq pA$, so it is a two-sided ideal in $\langle N,N \rangle$ because it is a two-sided ideal of $pA$.

To derive the next two statements we make use of \cite[Theorem]{Pedersen_1984}: Let $B$ be a C*-subalgebra of an AW*-algebra $A$ with zero annihilator of $B$ in $A$. Then the set of two-sided multipliers of $B$ in $A$ is isometrically $*$-isomorphic to the set of double centralizers of $B$ (i.e., the set ${\mathbb M}(B)$ of multipliers of $B$ in its enveloping von Neumann algebra $B'' \equiv B^{**}$) via an isomorphism that extends the identity map on $B$. Assertion (ii) follows if we consider the commutative (monotone complete) AW*-algebra $pZ(A)$, where $Z(\langle M,M \rangle) \subseteq Z(\langle N,N \rangle) \subseteq pZ(A)$. Commutativity and zero annihilators give ${\mathbb M}(Z(\langle M,M \rangle)) = {\mathbb M}(Z(\langle N,N \rangle)) = pZ(A)$ by the cited theorem. To obtain assertion (iii) we replace the centers by the entire respective C*-algebras. So, ${\mathbb M}(\langle M,M \rangle) = {\mathbb M}(\langle N,N \rangle) = pA$ by the ideal properties, and therefore, $Z({\mathbb M}(\langle M,M \rangle)) = Z({\mathbb M}(\langle N,N \rangle)) = pZ(A)$. 

By results by P.~Green \cite[Lemma 16]{Green_1978} and by G.~G.~Kasparov \cite[Th. 1]{Kasp_1980} we can identify the set of all bounded adjointable (module) maps on a given Hilbert C*-module isometrically $*$-isomorphically with the multiplier algebra of the C*-algebra of all ''compact'' module operators on it. Obviously, the set $\{ x \cdot {\rm id}_M : x \in Z(M(A)) \}$ is contained in the center of $End_A^*(M)$. Considering $M$ as an $\langle M,M \rangle$-$K_A(M)$ equivalence bimodule (cf.~\cite[Section 1]{BMSh_1994}) we can interchange the roles of $\langle M,M \rangle$ and of $K_A(M)$ as primary and secondary C*-algebras of coefficients of a respectively (double-)full Hilbert C*-bimodule.  Hence, the center of $End_A^*(M)$ is canonically contained in the center of $pA$ by (ii) and (iii) above. So, they have to coincide with the center of $pA$, $pZ(A)$. The argument for $N$ is similar. The centers of $End_A^*(M)$ and of $End_A^*(N)$ turn out to be isometrically *-isomorphic to $pZ(A)$, and to each other.
\end{proof}


The following theorem has parallels in the result \cite[Thm.~13]{M} by V.~M.~Manuilov which was stated for the communtative W*-case:

\begin{thm} \label{thm_isometric}
Let $A$ be a monotone complete C*-algebra and $M \subseteq N$ be two Hilbert $A$-modules. Suppose that $M \subseteq N$ has the orthogonal complement $M^\bot_N=\{0\}$ with respect to $N$.
Then the selfdual Hilbert $A$-module $M'$ admits an isometric embedding as a Hilbert $A$-submodule of the selfdual Hilbert $A$-module $N'$, which extends the given isometric embedding of $M$ in $N$ in the same way as $M$ is isometrically embedded in $M'$. 
The embedded copy of $M'$ in $N'$ is an orthogonal direct summand. 
\end{thm}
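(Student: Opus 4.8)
The plan is to build the embedding from the canonical restriction map between the $A$-dual modules and its adjoint, using that over a monotone complete C*-algebra the duals $M'$ and $N'$ are self-dual and that bounded $A$-linear maps between self-dual Hilbert $A$-modules are automatically adjointable (Paschke \cite{Paschke} in the W*-case, and the $\tau_2^o$-completion theory of \cite{F_1995}, \cite{Frank_1990} in the monotone complete case). First I would pass to the self-dual completions $M \hookrightarrow M'$ and $N \hookrightarrow N'$, carrying their canonical $A$-valued inner products extending those of $M$ and $N$. Denoting by $\hat{x}$ the canonical image of an element $x$ in the respective dual, I record the representation identity $\langle f, \hat{x} \rangle_{M'} = f(x)$ for $f \in M'$, $x \in M$ (and likewise on $N'$), which follows from $\langle \hat{y}, \hat{x} \rangle_{M'} = \langle y, x \rangle_M = \hat{y}(x)$.

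Next I would introduce the restriction map $R \colon N' \to M'$, $R(F) = F|_M$, which is well defined since $M \subseteq N$ gives $\|F|_M\| \le \|F\|$, and which is bounded and $A$-linear. By self-duality of $M'$ and $N'$, $R$ is adjointable; write $R^* \colon M' \to N'$ for its adjoint, normalised by $\langle R\xi, \eta \rangle_{M'} = \langle \xi, R^*\eta \rangle_{N'}$. The crux of the argument is the identity $R R^* = {\rm id}_{M'}$. To see it, fix $g \in M'$ and $m \in M$; since $R\hat{m}$ is just the restriction of $\langle m, . \rangle_N$ to $M$, i.e. $R\hat{m} = \hat{m}$ in $M'$, the representation identity and the adjoint relation yield $(R^* g)(m) = \langle R^* g, \hat{m} \rangle_{N'} = \langle g, R\hat{m} \rangle_{M'} = \langle g, \hat{m} \rangle_{M'} = g(m)$. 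Thus $R^* g$ is a genuine extension of $g$ from $M$ to $N$ and $R R^* g = (R^* g)|_M = g$. I expect this verification, which asserts that $R^*$ produces canonical extensions of dual elements, to be the main obstacle, as it is exactly where the self-dual completion machinery over monotone complete (as opposed to merely W*-) algebras must be invoked to guarantee both adjointability of $R$ and validity of the representation identity.

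From $R R^* = {\rm id}_{M'}$ everything follows quickly. First, $R^*$ is isometric, since $\langle R^* g, R^* g \rangle_{N'} = \langle g, R R^* g \rangle_{M'} = \langle g, g \rangle_{M'}$; in particular $R^*$ is inner-product preserving, hence injective, and embeds $M'$ into $N'$. Setting $e := R^* R$ I would check $e = e^2 = e^*$ using $R R^* = {\rm id}_{M'}$, so that $e$ is the orthogonal projection of $N'$ onto the closed submodule $R^* M'$, with $\ker e = \ker R$. This gives the orthogonal direct sum decomposition $N' = R^* M' \oplus \ker R$, exhibiting the embedded copy $R^* M'$ of $M'$ as an orthogonal direct summand.

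Finally I would record the compatibility clause. For $m \in M$ and every $F \in N'$ one computes $\langle F, R^* \hat{m} \rangle_{N'} = \langle RF, \hat{m} \rangle_{M'} = (F|_M)(m) = F(m) = \langle F, \hat{m} \rangle_{N'}$, where the last $\hat{m}$ is taken in $N'$; by nondegeneracy $R^* \hat{m} = \hat{m}$, so $R^*$ carries the copy of $M$ inside $M'$ onto the copy of $M$ inside $N \subseteq N'$. Hence the embedding $R^*$ extends $M \subseteq N$ in precisely the way $M$ sits isometrically in $M'$, as claimed. Under the standing hypothesis $M^\bot_N = \{0\}$ the orthogonal complement $\ker R = \{ F \in N' : F|_M = 0 \}$ is exactly the module of extensions of the zero functional, so this decomposition directly refines the separation picture of Lemma \ref{lemma_basic2}.
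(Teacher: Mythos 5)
Your proof is correct, but it takes a genuinely different route from the paper's. The paper constructs the embedded copy of $M'$ ``from below'': it forms the $\tau_2^o$-completion $M^\dagger$ of $M$ inside $M'$, respectively inside the biorthogonal complement $M^{\bot\bot}$ of $M$ taken in $N'$, identifies $M^\dagger$ with $M'$ via the characterization of self-duality as $\tau_2^o$-completeness of the unit ball (\cite[Thm.~4.1]{F_1995}), and invokes the fact that self-dual submodules are orthogonal summands to obtain the projection $P$. You instead work entirely on the dual side: the restriction operator $R\colon N'\to M'$ is adjointable because its domain $N'$ is self-dual, the representation identity $F(x)=\langle F,\hat x\rangle$ yields $RR^*={\rm id}_{M'}$, and then $R^*$ is an inner-product preserving embedding with $e=R^*R$ the orthogonal projection onto its range, giving $N'=R^*M'\oplus\ker R$. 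Both arguments rest on the same foundational inputs (Lin's extension of the inner product to the dual module, and self-duality of $M'$ and $N'$ over monotone complete $A$), but yours is shorter, makes the complement explicit ($\ker R=M^\bot_{N'}$ is precisely the module of extensions of the zero functional, whence $R^*M'=M^{\bot\bot}_{N'}$, recovering the paper's description of the embedded copy), and shows in passing that the hypothesis $M^\bot_N=\{0\}$ is not actually needed for this particular statement --- it only becomes essential in the follow-up results (Lemma \ref{lemma_P}, Proposition \ref{prop_identity}, Theorem \ref{theorem_ok}). Note also that your identity $RR^*={\rm id}_{M'}$ is exactly the Hahn--Banach-type assertion that every bounded $A$-linear functional on $M$ extends to one on $N$, which is the engine of the whole section. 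One small caveat: the representation identity $\langle f,\hat x\rangle_{M'}=f(x)$ for \emph{general} $f\in M'$ does not follow from the special case $\langle\hat y,\hat x\rangle_{M'}=\langle y,x\rangle_M$, as your parenthetical suggests; it is a separate and essential part of Lin's lemma \cite[Lemma 3.7]{Lin_1992} (quoted verbatim in the paper as $r(x)=\langle r,x\rangle$), so it should be cited as such rather than derived --- with that citation in place your argument is complete.
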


\begin{proof}
We need a type of order convergent nets in $M$ to give an intrinsic characterization of selfduality for Hilbert C*-modules over monotone C*-algebras. By \cite[Lemma 3.7]{Lin_1992}, for Hilbert C*-modules $M$ over monotone complete C*-algebras $A$ the $A$-valued inner product on $M$ can be continued to an $A$-valued inner product on the dual Banach $A$-module $M'$ such that $M$ is isometrically embedded in $M'$ by the map $x \in M \to \langle x,. \rangle \in M'$ preserving the inner product values, $r(x)=\langle r,x \rangle$ for any $x \in M \subseteq M'$, $r \in M'$, and $\langle r,r \rangle =\sup\{ r(x)^*r(x) : x \in M, \|x\| \leq 1 \}$. By \cite[Thm.~4.1]{F_1995} a Hilbert $A$-module over a monotone complete C*-algebra $A$ is self-dual iff the unit ball of $M$ is complete with respect to $\tau_2^o$-convergence.

Denote by $M^\dagger$ the $\tau_2^o$ -completed canonical copy of $M$ in $M'$, which is also isometrically embedded in $M'$ by construction. It has to be self-dual, and so either $M^\dagger = M'$ or $(M^\dagger)^\bot \not= \{ 0 \}$ in $M'$ since self-dual Hilbert C*-submodules are always direct summands. The latter would force $M^\bot \not= \{ 0 \}$ in $M'$, a contradiction to the definition of $M'$.  In particular, $M'$ does not contain any non-trivial elements perpendicular to the submodule $M$. 

Consider the isometric embedding of $M$ into $N$, which can be seen as an isometric embedding of $M$ into the self-dual Hilbert $A$-module $N'$ via the canonical isometric embedding of $N$ into its $A$-dual $N'$. By definition isometric module embeddings preserve the module structure and the norm of each element. By \cite{Lance} and \cite{Blecher} surjective module isometries of Hilbert C*-modules preserve the C*-inner product values, cf.~\cite[Thm.~5]{F_1997} and \cite[Thm.~1.1]{Solel}. Since $N'$ is a self-dual Hilbert $A$-module the biorthogonal complement $M^{\bot\bot}$ of $M$ embedded in $N'$ is also a self-dual Hilbert $A$-submodule and direct orthogonal summand of $N'$. The more, $M^\bot_{M^{\bot\bot}} = \{ 0 \}$ by construction. 

So for the isometric embedding of $M$ into $N'$ we can repeat the process of $\tau_2^o$-completion canonically restricting to elements of its biorthogonal completion $M^{\bot\bot}$ with respect to $N'$, without knowing the nature of its orthogonal completion $M^{\bot}$ with respect to $N'$. For the $\tau_2^o$-completion of $M$ in $M^{\bot\bot} \subseteq N'$ we obtain an isometric embedding of $M^\dagger$ into $N'$. It has to be self-dual by \cite[Thm.~4.1]{F_1995} and it has to coincide with $M_{N'}^{\bot\bot}$, and so either $M^\dagger = N'$ or $(M^\dagger)^\bot \not= \{ 0 \}$ in $N'$ since self-dual Hilbert C*-submodules are always direct summands. Also,  $M^\dagger$ is isometrically isomorphic to $M'$ as a Hilbert $A$-module. We denote the orthogonal, positive projection of $N'$ onto the isometrically embedded copy of $M'$ by $P$.

Consider the elements $y \in N$ as $A$-linear functionals $\langle y,. \rangle_N$ on the Hilbert $A$-submodule $M$. Two such elements $y_1,y_2$ induce the same bounded $A$-linear map $\langle y_1,. \rangle \equiv \langle y_2,. \rangle$ on $M$ if  and only if their difference is the zero element of $N$ since $M^\bot_N = \{ 0 \} $ by supposition. So, the elements of $N$ can be identified with elements of $M' = M^\dagger \subseteq N'$ injectively. Moreover, any element of $N \setminus M$ acts in another way on $M$ as any element of $M$. 
\end{proof}

In the following, we shall show that $P={\rm id}_{N'}$.


\begin{lem} \label{lemma_P}
Let $A$ be a monotone complete C*-algebra and $M \subseteq N$ be two Hilbert $A$-modules. Suppose that $M \subseteq N$ has the orthogonal complement $M^\bot_N=\{0\}$ with respect to $N$.
Then any orthogonal, positive projection $P:N' \to N'$ with $P \not= {\rm id}_N$ and $M' \subseteq P(N')$ is not an element of the multiplier C*-algebra $End_A^*(N)$ of the C*-algebra of ''compact'' module operators $K_A(N)$ of the isometrically embedded in $N'$ copy of $N$, i.e. any such $P \not= {\rm id}_N$ with $M' \subseteq P(N')$ is not a bounded adjointable (module) operator on $N$.
\end{lem}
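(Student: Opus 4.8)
The plan is to argue by contradiction: assume that such a projection $P$, a priori defined only as an orthogonal positive projection on the selfdual module $N'$, actually belongs to $End_A^*(N) = {\mathbb M}(K_A(N))$. The first point I would record is that every element of $End_A^*(N)$ maps the canonically embedded copy of $N$ into itself; in particular $P(N) \subseteq N$, and hence $({\rm id}_{N'} - P)(N) \subseteq N$ as well.

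The heart of the argument is to show that this membership forces $P$ to fix $N$ pointwise. Fix $n \in N$. Since $P$ is the orthogonal projection onto its range $P(N')$ and $M \subseteq M' \subseteq P(N')$ inside $N'$, the vector $({\rm id}_{N'} - P)(n)$ lies in the orthogonal complement of $P(N')$ and is therefore orthogonal to $M$. By the previous observation $({\rm id}_{N'} - P)(n)$ is an element of $N$, and the inner product of $N'$ restricts to that of $N$; thus $({\rm id}_{N'} - P)(n)$ is an element of $N$ orthogonal to all of $M$, i.e.~an element of $M^\bot_N$. By the standing hypothesis $M^\bot_N = \{0\}$ we get $({\rm id}_{N'} - P)(n) = 0$, that is $P(n) = n$. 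As $n \in N$ was arbitrary, $P$ restricts to the identity on $N$, equivalently $N \subseteq P(N')$.

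It remains to upgrade ``$P$ fixes $N$'' to ``$P = {\rm id}_{N'}$'', which contradicts $P \not= {\rm id}_N$. Since $N \subseteq P(N')$, the complementary projection ${\rm id}_{N'} - P$ maps $N'$ into the orthogonal complement of $P(N')$, which is contained in $N^\bot_{N'}$. But $N$ has trivial orthogonal complement in its selfdual completion: for $r \in N'$ the extended $A$-valued inner product satisfies $\langle r,n \rangle = r(n)$ for every $n \in N$ (the extension of the inner product to $N'$ already used in the proof of Theorem \ref{thm_isometric}), so $\langle r,n \rangle = 0$ for all $n \in N$ forces $r=0$. Hence $({\rm id}_{N'} - P)(N') = \{0\}$ and $P = {\rm id}_{N'}$, the desired contradiction. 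Alternatively one may conclude directly from $P|_N = {\rm id}_N$ together with the unique extension of adjointable operators from $N$ to $N'$.

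The step I expect to be the genuine obstacle is the bookkeeping that makes the two middle paragraphs legitimate: the careful tracking of the compatible inclusions $M \subseteq N \subseteq N'$ and $M \subseteq M' \subseteq P(N')$ supplied by Theorem \ref{thm_isometric}, together with the observation that membership of $P$ in $End_A^*(N)$ is precisely what guarantees $P(N) \subseteq N$, so that orthogonality computed in $N'$ can be transported back to $N$. Once this compatibility is secured, both appeals to the triviality of an orthogonal complement --- $M^\bot_N = \{0\}$ by hypothesis and $N^\bot_{N'} = \{0\}$ from the definition of the dual module --- are immediate.
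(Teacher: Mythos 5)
Your proof is correct and takes essentially the same route as the paper's: membership of $P$ in $End_A^*(N) = {\mathbb M}(K_A(N))$ forces $P(N) \subseteq N$, so $({\rm id}-P)(N)$ consists of elements of $N$ orthogonal to $M \subseteq P(N')$, which must vanish since $M^\bot_N = \{0\}$. Your closing step upgrading $P|_N = {\rm id}_N$ to $P = {\rm id}_{N'}$ via $N^\bot_{N'} = \{0\}$ is a careful clarification of a point the paper leaves implicit, but the core contradiction is identical.
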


\begin{proof}
Again, by results by P.~Green \cite[Lemma 16]{Green_1978} and by G.~G.~Kasparov \cite[Th. 1]{Kasp_1980} we can identify the set of all bounded adjointable (module) maps on a given Hilbert C*-module isometrically with the multiplier algebra of the C*-algebra of all ''compact'' module operators on it. So, if $P$ would be a non-one two-sided multiplier of the C*-algebra $K_A(N)$  then $P$ would belong to ${\mathbb M}(K_A(N)) = End_A^*(N)$. That is, $P(N) =M$ would have a non-trivial orthogonal complement $({\rm id}_N-P)(N)$ in $N$, a contradiction to the supposition.
\end{proof}


\begin{prop} \label{prop_compact}
Let $A$ be a monotone complete C*-algebra and $M \subseteq N$ be two Hilbert $A$-modules. Suppose that $M \subseteq N$ has the orthogonal complement $M^\bot_N=\{0\}$ with respect to $N$.
The four operator norms $\|T\|_M$, $\|T\|_N$, $\|T\|_{M'}$ and $\|T\|_{N'}$ coincide for any ''compact'' operator $T \in K_A(M)$ realized on resp. $M$, $N$, $M'$ and $N'$. As a consequence, the given isometric modular embedding $M \subseteq N$ gives rise to an isometric $*$-representation of $K_A(M)$ in $K_A(N)$ on the level of ''compact'' modular operators on $M$ and on $N$, respectively.
\end{prop}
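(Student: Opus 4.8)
The plan is to realize all four incarnations of a given ``compact'' operator inside the single self-dual module $N'$, where Theorem \ref{thm_isometric} exhibits $M'$ as an orthogonal direct summand, and to exploit the elementary fact that an operator assembled from vectors of $M$ always has its range in $M$. I would first settle the matter for the rank-one operators $\theta_{x,y}\colon z\mapsto x\langle y,z\rangle$ (with $x,y\in M$) and their finite linear combinations, forming the dense $*$-subalgebra $F_A(M)\subset K_A(M)$, and then pass to the norm-closure. For such a finite-rank $F=\sum_i\theta_{x_i,y_i}$ the natural realizations on $M$, $N$, $M'$ and $N'$ are all given by the \emph{same} formula $w\mapsto\sum_i x_i\langle y_i,w\rangle$, the inner product being taken in the respective module; since the embeddings $M\hookrightarrow N\hookrightarrow N'$ and $M\hookrightarrow M'\hookrightarrow N'$ preserve inner-product values (Theorem \ref{thm_isometric} and the construction of $M'$), these realizations are mutually compatible, and each of them maps its domain into $M$ because $x_i\in M$. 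Using $M^\bot_N=\{0\}$ together with the adjointability of $F$ on $N$ (its adjoint being $\sum_i\theta_{y_i,x_i}$), I would check that the operator induced on $N$ depends only on $F\in K_A(M)$ and not on the chosen decomposition, so that the realization map $\pi_N\colon F_A(M)\to End_A^*(N)$ is well defined, and a direct computation shows it is a $*$-homomorphism.

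I would then compare the four norms. Because $F$ restricted to $M$ is the original operator, the embeddings are isometric, and $Fw\in M$ whenever $w$ lies in $M$ or $N$, the supremum defining each of $\|F\|_N$, $\|F\|_{N'}$, $\|F\|_{M'}$ taken over a larger unit ball dominates the supremum over the smaller one; this yields at once $\|F\|_M\le\|F\|_N\le\|F\|_{N'}$ and $\|F\|_M\le\|F\|_{M'}$. For the matching upper bound I would move everything into $N'=M'\oplus(M')^\bot$ and use the orthogonal projection $P$ of Theorem \ref{thm_isometric}: since $y_i\in M\subseteq M'$, the relation $\langle y_i,(1-P)w\rangle=0$ shows that the realization of $F$ on $N'$ is $F'\circ P$, that is, $F'\oplus 0$ relative to this orthogonal decomposition, where $F'$ denotes the realization of $F$ on $M'$. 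Hence $\|F\|_{N'}=\|F'\|_{M'}=\|F\|_{M'}$, and the standard isometric extension of adjointable operators to the self-dual completion (cf.\ \cite{Paschke,F_1995}) gives $\|F\|_{M'}=\|F\|_M$. Chaining $\|F\|_M\le\|F\|_N\le\|F\|_{N'}=\|F\|_{M'}=\|F\|_M$ forces all four to coincide on finite-rank operators.

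Finally I would remove the finite-rank restriction and obtain the consequence in one stroke. By the previous paragraph $\pi_N$ is a $*$-homomorphism that is isometric on the dense $*$-subalgebra $F_A(M)\subset K_A(M)$, so it extends uniquely to an isometric $*$-homomorphism $K_A(M)\to End_A^*(N)$; since finite-rank operators on $M$ are carried to finite-rank operators on $N$, the image lies in $K_A(N)$, which is exactly the asserted isometric $*$-representation of $K_A(M)$ in $K_A(N)$. This extension simultaneously delivers $\|T\|_N=\|T\|_M$ for every $T\in K_A(M)$, and the analogous realizations on $M'$ and on $N'$ are isometric $*$-homomorphisms as well, supplying the remaining two equalities $\|T\|_{M'}=\|T\|_{N'}=\|T\|_M$.

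The step I expect to be the main obstacle is precisely the upper bound $\|T\|_N\le\|T\|_M$: enlarging the module on which an operator acts can, in principle, enlarge its operator norm, and the only reason it does not here is that inside the self-dual hull $N'$ the extension of $T$ is the orthogonal ampliation $T'\oplus 0$, which cannot increase the norm. Everything therefore hinges on having $M'$ available as an \emph{orthogonal} direct summand of $N'$, which is exactly the content of Theorem \ref{thm_isometric}; the remaining ingredients (compatibility of the embeddings, the range-in-$M$ observation, and automatic isometry of $*$-homomorphisms between C*-algebras) are routine once this structural input is in place.
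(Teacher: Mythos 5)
Your proof is correct and follows essentially the same route as the paper's: reduce to finite linear combinations of rank-one operators with generating vectors in $M$, use the norm-preserving extension of such operators to the self-dual completions (Paschke \cite{Paschke}, resp.\ \cite{F_1995} in the monotone complete case), compare norms inside $N'$ via the embedding of $M'$ from Theorem \ref{thm_isometric}, and pass to norm-limits to obtain the isometric $*$-representation of $K_A(M)$ in $K_A(N)$. One point where your write-up is actually tighter than the paper's: the paper's displayed chain of equalities invokes $N'=M'$, a fact only established later (Theorem \ref{theorem_ok}), whereas you justify $\|F\|_{N'}=\|F\|_{M'}$ using only the orthogonal-summand structure of Theorem \ref{thm_isometric} --- the realization of $F$ on $N'$ factors as $F'\circ P$, i.e.\ $F'\oplus 0$ --- so your argument needs no forward reference.
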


\begin{proof}
One knows $M$ is a Hilbert $A$-submodule of $N$, of $M'$ and of $N'$, resp., via canonical isometric modular embeddings. Only $M'$ and $N'$ are a priori self-dual. So, any elementary ''compact'' operator $\theta_{x,y}(\cdot) = y \langle x,. \rangle$ with $x,y \in M$ can be uniquely continued to a ''compact'' operator on its self-dual hosting Hilbert $A$-module with the same operator norm on the self-dual hosting Hilbert $A$-module, cf. \cite[Prop.~3.6]{Paschke} for the W*-case and \cite[Cor.~6.3]{F_1995} for the monotone complete C*-case. Of course, this continuation can be considered as $\theta_{x,y}$ again, with $x,y$ of the isometrically embedded copy of $M$, otherwise the operator norm would increase. So we have the operator norm equalities $\|\theta_{x,y}\|_M = \|\theta_{x,y}\|_{M'}$, $\|\theta_{x,y}\|_M= \|\theta_{x,y}\|_{N'}$. For analogous reasons, $\|\theta_{x,y}\|_N= \|\theta_{x,y}\|_{N'}$ for the particular case of the canonical isometric modular embedding of $N$ into $N'$. Consequently, $\|\theta_{x,y}\|_M = \|\theta_{x,y}\|_N$ for any $x,y \in M$ and for the given isometric modular embedding $M \subset N$ fulfilling the supposition.

Since the elementary ''compact'' operators in $K_A(M)$ can be isometrically identified with elementary ''compact'' operators in $K_A(N)$ which admit the two generating elements from $M \subset N$ we can continue this isometric identification to finite sums $T:=\sum_i \lambda_i \theta_{x_i,y_i}$ with elements $\{ x_i, y_i \}\in M \subset N$ and complex numbers $\{\lambda_i\}$. Indeed, for the chain of isometric modular embeddings $M \subset N \subset N'=M'$ we have
\begin{eqnarray*} 
   \sup_{z \in N, \|z\| \leq 1} \|T(z)\|_N    & = &   \sup_{z \in N', \|z\| \leq 1} \|T(z)\|_{N'} \\
                                                              & = &   \sup_{z \in M', \|z\| \leq 1} \|T(z)\|_{M'} \\
                                                              & = &   \sup_{z \in M, \|z\| \leq 1} \|T(z)\|_M \\
\end{eqnarray*}
Further, we obtain any ''compact'' element $T \in K_A(M)$ and of $K_A(N)$ as the norm-limit of sequences of such finite sums, which can be step by step isometrically identified with resp. ''compact'' operators on $M$ and on $N$. Because of the analogous algebraic structures in $K_A(M)$ and in $K_A(N)$ and because of the demonstrated isometric identifications, we get an isometric $*$-representation of $K_A(M)$ in $K_A(N)$. 
\end{proof}

\begin{prop}  \label{prop_anihilators}
Let $A$ be a monotone complete C*-algebra and $M \subseteq N$ be two Hilbert $A$-modules. Suppose that $M \subseteq N$ has the orthogonal complement $M^\bot_N=\{0\}$ with respect to $N$.
Then $End_A^*(N)$ does not contain any non-zero element $T$ such that $T$ is perpendicular to $K_A(M) \subseteq K_A(N)$. 
\end{prop}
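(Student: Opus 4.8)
The plan is to interpret ``$T$ perpendicular to $K_A(M)$'' as orthogonality in the C*-algebra $End_A^*(N)={\mathbb M}(K_A(N))$, that is, $T^*S=TS^*=0$ for every $S\in K_A(M)$; since $K_A(M)$ is $*$-closed this is equivalent to the two-sided annihilator condition $T\,K_A(M)=K_A(M)\,T=\{0\}$. First I would unwind the condition $T\,K_A(M)=\{0\}$ on the elementary operators. By Proposition \ref{prop_compact}, $K_A(M)$ sits in $K_A(N)$ as the closed linear span of the operators $\theta_{x,y}\colon z\mapsto y\langle x,z\rangle$ with $x,y\in M\subseteq N$. Composing with $T$ and using $A$-linearity gives $(T\theta_{x,y})(z)=T(y)\langle x,z\rangle$, so $T\theta_{x,y}=0$ for all $x,y\in M$ forces $T(y)\langle x,z\rangle=0$ for all $x,y\in M$ and all $z\in N$. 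Restricting $z$ to $M$ yields $T(y)\,\langle M,M\rangle=\{0\}$ for every $y\in M$.

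Next, fix $y\in M$ and put $n:=T(y)\in N$; the goal reduces to $n=0$. For any $v\in M$ I would pair the identity $n\langle x,v'\rangle=0$ (with $x,v'\in M$) against $v$, obtaining $\langle v',x\rangle\,\langle n,v\rangle=\langle n\langle x,v'\rangle,v\rangle=0$; hence $\langle M,M\rangle\,\langle n,v\rangle=\{0\}$, so $b:=\langle n,v\rangle$ lies in the annihilator of $\langle M,M\rangle$ in $A$. This is the point where the structural input of Lemma \ref{lemma_basic} is needed: by part (i) the ideals $\langle M,M\rangle$ and $\langle N,N\rangle$ share the same central annihilator $(1-p)A$, whence $pb=0$; on the other hand $b=\langle n,v\rangle\in\langle N,N\rangle\subseteq pA$, so $b=pb=0$. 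Thus $\langle n,v\rangle=0$ for every $v\in M$, i.e.\ $n\perp M$, and since $M^\bot_N=\{0\}$ we conclude $n=0$. As $y$ was arbitrary, $T$ vanishes on $M$.

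Finally I would upgrade $T|_M=0$ to $T=0$ by invoking adjointability of $T\in End_A^*(N)$, which is where the hypothesis $M^\bot_N=\{0\}$ is used a second time: for $x\in M$ and $z\in N$ one has $\langle x,T^*z\rangle=\langle Tx,z\rangle=0$, so $T^*z\in M^\bot_N=\{0\}$, giving $T^*=0$ and hence $T=0$. I expect the only genuinely delicate step to be the bookkeeping of the first two paragraphs — translating the operator-level orthogonality into the scalar identity $\langle M,M\rangle\,\langle n,v\rangle=\{0\}$ and then invoking the coincidence of the annihilators of $\langle M,M\rangle$ and $\langle N,N\rangle$ from Lemma \ref{lemma_basic}(i); the remaining manipulations are routine uses of the inner product axioms together with the triviality of $M^\bot_N$. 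It is worth noting that the argument in fact uses only the one-sided relation $T\,K_A(M)=\{0\}$.
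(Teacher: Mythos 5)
Your proof is correct and follows essentially the same route as the paper's: both unwind the annihilation condition on the elementary operators $\theta_{x,y}$ with $x,y\in M$, invoke Lemma \ref{lemma_basic}(i) (the common central carrier projection $p$) to conclude that an element of $\langle N,N\rangle\subseteq pA$ annihilating $\langle M,M\rangle$ must vanish, and then use $M^\bot_N=\{0\}$. The only difference is cosmetic: you treat the order $T\theta_{x,y}=0$ directly and finish with adjointability of $T$, whereas the paper treats $\theta_{x,y}T=0$ directly (which yields $T(z)\perp M$ for every $z\in N$ at once, with no appeal to the adjoint) and reduces the opposite order to that case by applying the involution --- the two arguments are mirror images of one another.
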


\begin{proof}
Suppose, $\theta_{x,y} T = 0$ on $N$ for any $x,y \in M$, i.e. $y\langle x, T(z) \rangle=0$ for any $z \in N$, any $x,y \in M$. Since $y \in M$ is arbitrary and Lemma \ref{lemma_basic},(i) holds this is equivalent to the condition $\langle x, T(z) \rangle =0$  for any $x \in M$, any $z \in N$. By supposition this means $T(z)=0$ for any $z \in N$. So $T=0$.  If one investigates the opposite multiplication order $T \theta_{x,y}=0$ we can apply the involution on $End_A^*(N)$ and reduce the problem to the one treated.
\end{proof}


\begin{prop} \label{prop_identity}
Let $A$ be a monotone complete C*-algebra and $M \subseteq N$ be two Hilbert $A$-modules. Suppose that $M \subseteq N$ has the orthogonal complement $M^\bot_N=\{0\}$ with respect to $N$.
Then the identity operator on $M$ extends to the identity operator on $N$. This is its unique extension as a bounded modular adjointable operator which preserves the norm.
\end{prop}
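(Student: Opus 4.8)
The plan is to dispatch the existence claim in one line and concentrate all the effort on uniqueness. Existence is immediate: since $M \subseteq N$, the restriction of $\mathrm{id}_N$ to $M$ is exactly $\mathrm{id}_M$, and $\mathrm{id}_N$ is self-adjoint (hence adjointable) and isometric (hence norm-preserving). So $\mathrm{id}_N$ is an extension of the required type, and the substance of the proposition is that it is the only one.

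For uniqueness I would exploit adjointability to transport the hypothesis on $M$ across the $A$-valued inner product, where the condition $M^\bot_N = \{0\}$ can be brought to bear. Let $T \in End_A^*(N)$ satisfy $T|_M = \mathrm{id}_M$, and let $T^*$ be its adjoint. For every $x \in M$ and every $z \in N$, adjointability together with $Tx = x$ gives $\langle x, T^* z\rangle = \langle Tx, z\rangle = \langle x, z\rangle$, and hence $\langle x, T^* z - z\rangle = 0$ for all $x \in M$. In other words, the elements $T^* z$ and $z$ of $N$ induce the same bounded $A$-linear functional on $M$, so by Lemma \ref{lemma_basic2},(i) (that is, by $M^\bot_N = \{0\}$) we conclude $T^* z = z$. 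As $z \in N$ was arbitrary, $T^* = \mathrm{id}_N$, and taking adjoints once more yields $T = (T^*)^* = \mathrm{id}_N$. Note that this argument uses neither monotone completeness of $A$ nor the norm-preservation hypothesis; adjointability and triviality of the orthogonal complement suffice.

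The point worth flagging is exactly that the norm-preservation assumption is inessential to the computation: adjointability alone, combined with $M^\bot_N = \{0\}$, already forces $T = \mathrm{id}_N$. The genuine content of the statement is therefore the contrast with the non-adjointable world. Writing $S = T - \mathrm{id}_N$ turns any competing extension into a bounded modular operator on $N$ with $M \subseteq \ker S$; were such an $S$ nonzero, it would be precisely a non-adjointable operator whose kernel contains $M$ and fails to be biorthogonally closed with respect to $N$ — the very phenomenon tracked throughout the paper and tied, per the abstract, to the existence of a nonzero separating functional $r_0$. The main (conceptual rather than computational) obstacle is thus the recognition that adjointability is exactly the hypothesis that excludes these pathological extensions, which is why the proposition is stated for adjointable operators and why the clean adjoint manipulation above, rather than any appeal to the finer structure theory (Theorem \ref{thm_isometric}, Lemma \ref{lemma_P}), is all that is needed.
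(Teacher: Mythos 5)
Your proof is correct, and it takes a genuinely different and much more elementary route than the paper's. Your uniqueness computation --- $\langle x, T^*z\rangle = \langle Tx,z\rangle = \langle x,z\rangle$ for all $x\in M$, $z\in N$, hence $T^*z-z\in M^\bot_N=\{0\}$, hence $T^*={\rm id}_N$ and $T=(T^*)^*={\rm id}_N$ --- is airtight, and your observation that it uses neither monotone completeness nor norm preservation is accurate: the proposition as literally stated holds over an arbitrary C*-algebra, with uniqueness in the larger class of all bounded adjointable extensions. The paper instead extends ${\rm id}_M$ along the chain of embeddings $M\subset N\subseteq N'$, $M\subseteq M'\subseteq N'$ via the unique norm-preserving extension theorem of Paschke/Frank (this is where monotone completeness enters), identifies the extension with the orthogonal projection $P$ of $N'$ onto the embedded copy of $M'$ from Theorem \ref{thm_isometric}, and then invokes Lemma \ref{lemma_P} and Proposition \ref{prop_anihilators} to force $P={\rm id}_{N'}$. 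What that heavier route buys is precisely this byproduct, ${\rm id}_{M'}={\rm id}_{N'}$, i.e.\ that the summand $M'$ exhausts $N'$; this is the fact that the proof of Theorem \ref{theorem_ok} actually cites Proposition \ref{prop_identity} for in order to conclude $M'=N'$. Note that your argument cannot be run with $N'$ in place of $N$: that would require $M^\bot_{N'}=\{0\}$, and a nonzero element of $M^\bot_{N'}$ is exactly a nonzero bounded $A$-linear functional on $N$ vanishing on $M$, so this hypothesis is equivalent to the conclusion of Theorem \ref{theorem_ok} itself. Thus your proof settles the proposition (more generally and more cleanly), but if it were substituted for the paper's proof, the argument for Theorem \ref{theorem_ok} would have to be reorganized to obtain $P={\rm id}_{N'}$ by other means. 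Your closing remark is also on target: adjointability is exactly what excludes the Kaad--Skeide-type pathology, since a competing bounded modular (not necessarily adjointable) extension $T$ would produce $S=T-{\rm id}_N\neq 0$ with $M\subseteq\ker S$ and $(\ker S)^{\bot\bot}=N$, and ruling that out over monotone complete $A$ requires the full strength of Theorem \ref{theorem_ok}.
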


\begin{proof}
Consider the isometric modular embeddings $M \subset N$ as given, $N \subseteq N'$ and $M \subseteq M'$ by \cite[Thm.~3.2]{Paschke} and $M' \subseteq N'$ as described in Theorem \ref{thm_isometric}. Since every bounded module operator on the smaller Hilbert $A$-module of each of the last three pairings admits a unique extension to a bounded module operator on the larger Hilbert $A$-module preserving its norm by \cite[Cor.~3.7]{Paschke}, we can extend the identity operator on $M$ to a unique bounded module operator on $N$ which is an orthogonal positive projection $Q$ on $N$. Obviously, $\|Q\| = \|{\rm id}_N\| = 1$, so by the uniqueness of the extensions $Q=P \in End_A^*(N')$ with $P$ from the end of the proof of Theorem \ref{thm_isometric}. However, $P \not\in End_A^*(N)$ by Lemma \ref{lemma_P} in case it is not the identity of $End_A^*(N)$. So, $Q = P = {\rm id}_{N}$ is the only possible alternative according to Theorem \ref{thm_isometric}, Proposition \ref{prop_anihilators}. In other words,  $End_A^*(M)$ and  $End_A^*(N)$ share their identity operators in this canonical setting.
Moreover, by \cite[Thm.~3.2]{Paschke} and \cite[Cor.~3.7]{Paschke} we have ${\rm id}_M = {\rm id}_{M'}$ and ${\rm id}_N = {\rm id}_{N'}$ for the respective identity operators for the canonical isometric modular embeddings. Therefore, ${\rm id}_{M'} = {\rm id}_{N'}$ and $P={\rm id}_{N}$.

By \cite[Thm.]{Pedersen_1984} and Lemma \ref{lemma_P} any non-degenerated isometric $*$-representation of $K_A(N)$ is at the same time a non-degenerated isometric $*$-representation of $K_A(M)$, and hence, an isometric $*$-representation of their respective multiplier algebras $End_A^*(M)$ and  $End_A^*(N)$ which share their identity elements. So, this picture is highly stable. The derived facts of representation theory support the conclusion.
\end{proof}


We arrive at a central result, a particular case of which was published by V.~M.~Ma\-nuilov as \cite[Thm.~9]{M} for commutative W*-algebras and as \cite[Thm.~10]{M} for type I von Neumann algebras..

\begin{thm} \label{theorem_ok}
Let $A$ be a monotone complete C*-algebra and $M \subseteq N$ be two Hilbert $A$-modules. Suppose that $M \subseteq N$ has the orthogonal complement $M^\bot_N=\{0\}$ with respect to $N$.
Then the selfdual Hilbert $A$-module $M'$ admits an isometric embedding as a Hilbert $A$-submodule of the selfdual Hilbert $A$-module $N'$ such that $M'$ coincides with $N'$ preserving the given isometric inclusions. In particular, there does not exist any bounded $A$-linear functional $r_0: N \to A$ such that $r_0$ vanishes on $M$, but which is not the zero functional on $N$.
\end{thm}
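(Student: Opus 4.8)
The plan is to establish the core assertion $M' = N'$ (under the isometric embeddings already constructed) by assembling the three preceding results, after which the functional statement follows quickly.

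First I would recall from Theorem \ref{thm_isometric} that the self-dual module $M'$ sits isometrically inside $N'$ as an orthogonal direct summand, so that there is a well-defined orthogonal positive projection $P : N' \to N'$ whose range is the embedded copy of $M'$, i.e. $P(N') = M'$. The key is then to pin down $P$. Proposition \ref{prop_identity} shows that the identity operator on $M$ extends uniquely, as a norm-preserving bounded adjointable module map, to the identity on $N$, and that this forces the projection $P$ of Theorem \ref{thm_isometric} to coincide with ${\rm id}_{N'}$: indeed, $P$ is the unique extension of ${\rm id}_M$ to $N'$, its extension to $N$ is an orthogonal positive projection $Q$ of norm one, and Lemma \ref{lemma_P} rules out $P$ being a proper adjointable projection on $N$, leaving $P = {\rm id}_N = {\rm id}_{N'}$ as the only possibility. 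Since $P$ has range $M'$ and equals the identity, I conclude $M' = P(N') = N'$, which is precisely the first claim.

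For the ``in particular'' part, I would take any bounded $A$-linear functional $r_0 : N \to A$ vanishing on $M$. Regarding $r_0$ as a bounded module map $N \to A$, I extend it by \cite[Cor.~3.7]{Paschke} to a bounded $A$-linear map $\tilde{r}_0$ on $N'$, and then use the self-duality of $N'$ to represent it as $\tilde{r}_0 = \langle \xi, . \rangle$ for a unique $\xi \in N'$. The hypothesis $r_0|_M = 0$ becomes $\langle \xi, x \rangle = 0$ for all $x \in M$, i.e.~$\xi \in M^\bot_{N'}$. Having already identified $N' = M'$, and recalling from Theorem \ref{thm_isometric} that $M'$ contains no non-zero element orthogonal to $M$, I obtain $\xi = 0$ and hence $r_0 = 0$ on $N$.

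The bulk of the difficulty lives in the preceding propositions rather than here; the theorem itself is an assembly. The step I expect to require the most care is the bookkeeping of identifications: making sure the projection $P$ onto $M'$ produced in Theorem \ref{thm_isometric} is literally the same operator that Proposition \ref{prop_identity} forces to be ${\rm id}_{N'}$, and that the triviality of the orthogonal complement of $M$ transports correctly from $M'$ to $N'$ once the two are identified. A secondary point to check is that the extension-plus-representation of $r_0$ through the chain $M \subseteq N \subseteq N' = M'$ is compatible with all the canonical embeddings, so that ``vanishing on $M$'' genuinely places the representing vector $\xi$ in $M^\bot_{N'}$.
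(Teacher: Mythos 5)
Your proof is correct and takes essentially the same route as the paper's: the identification $M'=N'$ is obtained by assembling Theorem \ref{thm_isometric}, Lemma \ref{lemma_P} and Proposition \ref{prop_identity} exactly as in the original argument, with the projection $P$ forced to be the identity. Your treatment of the ``in particular'' clause only spells out in more detail (representing the extended functional by a vector $\xi \in N' = M'$ orthogonal to $M$, hence zero) what the paper compresses into a single sentence appealing to the definition of $M'$.
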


\begin{proof}
By Theorem~\ref{thm_isometric} the isometric embedding of $M$ into $N$ can be continued to an isometric embedding of $M'$ into $N'$ as a direct orthogonal summand. Since $M^\bot = \{0 \}$ Lemma \ref{lemma_P} and Proposition \ref{prop_identity} imply the isometric modular isomorphism $M'=N'$. By the definition of $M'$ the (bounded $A$-linear) zero functional on $M$ with values in $A$ has only the zero functional from $N$ to $A$ as its continuation.
\end{proof}

The following fact is not true for any C*-algebra $A$ and any Hilbert $A$-module $N$, what makes it remarkable.

\begin{cor} \label{cor_ok1}
Let $A$ be a monotone complete C*-algebra and $M \subseteq N$ be two Hilbert $A$-modules. Suppose that $M \subseteq N$ has the orthogonal complement $M^\bot_N=\{0\}$ with respect to $N$.
Then the $A$-dual Banach $A$-modules of $N$ and of $M$ isometrically coincide as Banach $A$-modules. In particular, for a given Hilbert $A$-module $N$ this holds for any (smaller-equal) Hilbert $A$-submodule $M$ with $M^\bot_N=\{0\}$. 
\end{cor}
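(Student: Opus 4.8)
The plan is to realise the claimed coincidence through the explicit restriction-of-functionals map and thereby upgrade Theorem~\ref{theorem_ok}, which already identifies $M'$ and $N'$ as selfdual Hilbert $A$-modules, into a statement about their Banach $A$-module structure. First I would recall from the duality theory over monotone complete C*-algebras (\cite[Thm.~3.2]{Paschke}, \cite[Lemma 3.7]{Lin_1992}, \cite[Thm.~4.1]{F_1995}) that for every Hilbert $A$-module $L$ the $A$-dual Banach $A$-module $L'$ is selfdual, the canonical embedding $L \hookrightarrow L'$ is isometric with $r(x)=\langle r,x\rangle$ for $r\in L'$, $x\in L$, and — the point I would single out — the Banach (functional) norm $\|r\|_{L'}=\sup_{\|x\|\le 1}\|r(x)\|$ coincides with the induced Hilbert module norm $\|\langle r,r\rangle\|^{1/2}$ on $L'$. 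Consequently an isometry of the duals as Hilbert $A$-modules and an isometry of the duals as Banach $A$-modules carry the same metric content.

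Next I would introduce the map $\iota^\ast\colon N'\to M'$, $r\mapsto r|_M$, dual to the inclusion $\iota\colon M\hookrightarrow N$. It is manifestly a contractive right-$A$-module homomorphism, since $M\subseteq N$ forces $\|r|_M\|_{M'}\le\|r\|_{N'}$. Its injectivity is precisely the ``in particular'' clause of Theorem~\ref{theorem_ok}: if $r|_M=0$ then $r$ is a bounded $A$-linear functional on $N$ vanishing on $M$, hence $r=0$.

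The substance of the argument is surjectivity together with isometry, and here I would feed in the Hilbert-module isomorphism $\Phi\colon M'\to N'$ furnished by Theorem~\ref{theorem_ok}, which preserves the $A$-valued inner products and extends the inclusions, so that $\Phi(x)=x$ for $x\in M$. Given $s\in M'$, viewed as an element of the selfdual module $M'$ with $s(x)=\langle s,x\rangle$ and $\|s\|_{M'}=\|s\|$, I set $\eta=\Phi(s)\in N'$ and define $r\in N'$ by $r(y)=\langle\eta,y\rangle$ for $y\in N$. Then $\|r\|_{N'}=\|\eta\|=\|s\|$ by the norm identification above, while for $x\in M$ one computes $r(x)=\langle\eta,x\rangle=\langle\Phi(s),\Phi(x)\rangle=\langle s,x\rangle=s(x)$, so that $\iota^\ast(r)=s$. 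Hence $\iota^\ast$ is onto and satisfies $\|\iota^\ast(r)\|_{M'}=\|r\|_{N'}$, making it an isometric isomorphism of Banach $A$-modules. The concluding sentence requires no further work: the hypothesis $M^\bot_N=\{0\}$ is exactly the standing assumption, so the conclusion applies to every such submodule $M\subseteq N$.

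I expect the only genuinely delicate point to be the bookkeeping that the abstract Hilbert-module identification $\Phi$ of Theorem~\ref{theorem_ok} is in fact implemented by the concrete restriction map $\iota^\ast$ — that $\Phi$ sends the functional-representative of $s$ to the functional-representative of an extension of $s$ — together with the standard but easily overlooked fact that the functional norm on a dual module agrees with its selfdual Hilbert-module norm. Once these identifications are made transparent, the corollary follows directly from Theorem~\ref{theorem_ok}.
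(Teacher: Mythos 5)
Your proposal is correct and follows the same route as the paper: the corollary is stated there as an immediate consequence of Theorem~\ref{theorem_ok}, whose identification of $M'$ with $N'$ (preserving the inner products and the given inclusions) is exactly what you feed into the restriction map $\iota^\ast$. Your explicit bookkeeping --- that $\iota^\ast$ is injective by the ``in particular'' clause of Theorem~\ref{theorem_ok}, surjective and isometric via the inner-product-preserving identification, and that the functional norm on a dual module agrees with its selfdual Hilbert-module norm by \cite[Lemma 3.7]{Lin_1992} --- is precisely the verification the paper leaves implicit.
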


As a central result we got that any isometric embedding of a Hilbert $A$-module $M$ into another Hilbert $A$-module $N$ with $M^\bot = \{ 0 \}$ continues to an isometric coincidence of the $A$-dual Banach $A$-modules $M'$ and $N'$. So the following corollary excludes the existence of examples in the described context like those given by J.~Kaad and M.~Skeide in \cite{KS}. 

\begin{cor}
Let $A$ be a monotone complete C*-algebra and $M \subseteq N$ be two Hilbert $A$-modules. Suppose that $M \subseteq N$ has the orthogonal complement $M^\bot_N=\{0\}$ with respect to $N$.
The for any element $n \in N \setminus M$ the equality $\|n\|_N = \|n\|_{M'}$ holds for these two norms of it. 
\end{cor}

\begin{rmk} {\rm
Let $A$ be a monotone complete C*-algebra and $M \subseteq N$ be two Hilbert $A$-modules. Suppose that $M \subseteq N$ has the orthogonal complement $M^\bot_N=\{0\}$ with respect to $N$.  
Then by the unique extension theorem by W.~L.~Paschke \cite[Cor.~3.7]{Paschke} any bounded (adjointable) operator on $M$ or on $N$, respectively, has a unique equal-norm extension to a bounded adjointable operator on the $A$-dual Hilbert $A$-modules $N'=M'$. However, in general the C*-algebras of ''compact'' modular operators $K_A(N)$ and $K_A(M)$ are not a pair of a C*-algebra and one of its norm-closed two-sided ideals if considered as C*-subalgebras of $End_A(N')=End_A(M')$. Moreover, bounded (adjointable) module operators on $M$ extended to the monotone complete C*-algebra $End_A(N')=End_A(M')$ and, afterwards, reduced to bounded module operators with domain $N$ might not preserve $N$. Consequently, if for pairs $I \subseteq J$ of a two-sided norm-closed ideal $I$ in a C*-algebra $J$ we canonically have ${\mathbb M}(J) \subseteq {\mathbb M}(I)$ (cf.~local multiplier algebra definition for C*-algebras, \cite[Prop.~1.2.20, Def.~2.3.1, Prop.~2.3.4]{Ara_Mathieu}), we do not know anything definit on the interrelation of the multiplier C*-algebras $End_A^*(M)$ and $End_A^*(N)$ except that they share $K_A(M) \oplus Z({\mathbb M}(K_A(M)) {\rm id}_N$ by Lemma \ref{lemma_basic}. However, in case $N$ is selfdual we have an isometric $*$-representation of $End_A^*(M)$ in $End_A(N')$  as an order-dense C*-subalgebra with the same identity operator and center. }
\end{rmk}


\section{Special bounded modular functionals and kernels of bounded modular operators}

For bounded adjointable operators $T: M \to N$ between Hilbert $A$-modules $M$, $N$ over a C*-algebra $A$ there are some simple facts characterizing basic situations. Obviously, $T$ is $A$-linear, the kernel ${\rm ker}(T)$  and the set $T^*(N)$ are orthogonal to each other in $M$ and norm-closed.

\begin{lem}
Let $M$, $N$ be two Hilbert $A$-modules over a C*-algebra $A$, and let $T: M\to N$ be a bounded adjointable operator.  Then:
\begin{enumerate}
\item The kernel ${\rm ker}(T)$ of $T$ is biorthogonally complemented. 
\item There does not exist any non-zero element of $M$ orthogonal to both the sets ${\rm Ker}(T)$ and $T^*(N)^{\bot\bot}$. 
\item The direct orthogonal sum of ${\rm Ker}(T)$ and $(T^*(N))^{\bot\bot}$ might not be equal to $M$.
\end{enumerate}
\end{lem}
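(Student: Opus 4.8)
The plan is to reduce parts (i) and (ii) to the single structural identity ${\rm ker}(T) = (T^*(N))^\bot$, valid for every bounded adjointable $T$, and then to handle (iii) separately by exhibiting one concrete example.

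First I would record the identity. If $T(x)=0$, then for every $y \in N$ one has $\langle x, T^*(y)\rangle = \langle T(x), y\rangle = 0$, so $x \perp T^*(N)$; conversely, if $x \perp T^*(N)$ then $\langle T(x), y\rangle = \langle x, T^*(y)\rangle = 0$ for all $y \in N$, and taking $y = T(x)$ forces $\langle T(x), T(x)\rangle = 0$, i.e.\ $T(x)=0$. Hence ${\rm ker}(T) = (T^*(N))^\bot$. Part (i) now follows from the purely formal fact that every orthogonal complement is biorthogonally closed: since $\bot$ reverses inclusions and $S \subseteq S^{\bot\bot}$ always holds, applying $\bot$ to the latter gives $S^{\bot\bot\bot} \subseteq S^\bot$, while the same inclusion applied to $S^\bot$ in place of $S$ gives $S^\bot \subseteq S^{\bot\bot\bot}$, whence $S^\bot = (S^\bot)^{\bot\bot}$. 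Taking $S = T^*(N)$ yields ${\rm ker}(T) = ({\rm ker}(T))^{\bot\bot}$. Part (ii) is immediate from the same identity: an element $x$ orthogonal to $T^*(N)$ lies in $(T^*(N))^\bot = {\rm ker}(T)$, so if in addition $x \perp {\rm ker}(T)$ then $\langle x,x\rangle = 0$ and $x=0$.

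The remaining and genuinely non-formal point is (iii), which is a counterexample statement: I would produce an adjointable $T$ whose kernel, although biorthogonally complemented by (i), is not an orthogonal direct summand. Taking $\bot$ in the identity above gives $(T^*(N))^{\bot\bot} = ({\rm ker}(T))^\bot$, so ${\rm ker}(T) \oplus (T^*(N))^{\bot\bot} = {\rm ker}(T) \oplus ({\rm ker}(T))^\bot$, and this equals $M$ precisely when ${\rm ker}(T)$ is orthogonally complemented. A convenient example is commutative: let $A = C([0,1])$ act on $M = N = A$ with $\langle f,g\rangle = f^*g$, fix a real-valued $e \in A$ vanishing exactly on $[0,1/2]$ (e.g.\ $e(t) = \max(t-1/2,0)$), and let $T$ be multiplication by $e$, which is self-adjoint, hence adjointable with $T^*=T$. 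Then ${\rm ker}(T)$ consists of the functions vanishing on $[1/2,1]$, while $({\rm ker}(T))^\bot$ consists of the functions vanishing on $[0,1/2]$; every element of their orthogonal sum therefore vanishes at $t=1/2$, so the sum cannot contain the unit of $A$ and is a proper submodule of $M$ (this realizes the classical phenomenon of a biorthogonally but not orthogonally complemented submodule, cf.~\cite{Lance_95}).

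I expect the only real obstacle to lie in (iii), namely choosing $T$ so that the single ''gluing point'' $t=1/2$ is forced to be a common zero of ${\rm ker}(T)$ and $({\rm ker}(T))^\bot$: it is exactly the continuity constraint at that point that destroys orthogonal complementation while leaving the biorthogonal closure of the kernel intact. Parts (i) and (ii), by contrast, are formal once the kernel--range orthogonality identity is established.
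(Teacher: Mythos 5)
Your proposal is correct and takes essentially the same route as the paper: parts (i) and (ii) rest on the kernel--range orthogonality relation (the paper runs the identical computation $\langle x,T^*(y)\rangle=\langle T(x),y\rangle$ as a direct contradiction argument, while you package it as the identity ${\rm ker}(T)=(T^*(N))^\bot$ plus the formal fact $S^{\bot\bot\bot}=S^\bot$), and your counterexample for (iii) --- multiplication on $C([0,1])$ by a function vanishing exactly on $[0,\frac12]$ --- is precisely the paper's example, for which you additionally supply the verification the paper leaves implicit.
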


\begin{proof}
To show (i), suppose there exists an element $x \in {\rm ker}(T)^{\bot\bot}$ such that $T(x) \not= 0$. Then $x$ is orthogonal to $T^*(N)^\bot$, i.e.
$0=\langle x, T^*(y) \rangle = \langle T(x), y \rangle$ for any $y \in N$. Therefore, $T(x)=0$, a contradiction. 
To derive (ii), the argument is the same: Any element $x \in M$ orthogonal to $T^*(N)$ belongs to ${\rm ker}(T)^{\bot\bot}$, so there is no element of $M$ orthogonal to both ${\rm ker}(T)={\rm ker}(T)^{\bot\bot}$ and $(T^*(N))^{\bot\bot}$, as well as to their direct orthogonal sum in $M$. 

To show the possible non-coincidence of $M$ and of the direct orthogonal sum of ${\rm Ker}(T)$ and $T^*(N)^{\bot\bot}$ for a certain bounded adjointable operator between Hilbert C*-modules consider $A=C([0,1])$ as a Hilbert $A$-module over itself, and the element $f_0 \in A$ such that $f_0$ equals to zero on $[0,\frac{1}{2}]$ and $f_0$ is strongly positive on $(\frac{1}{2},1]$. Then the multiplication operator $T$ of $A$ by $f_0$ has the property ${\rm Ker}(T) \oplus (T^*(N))^{\bot\bot} \not=M$.
\end{proof}

\begin{prop} \label{prop_consequence1}
Let $N$ be a Hilbert $A$-module over a C*-algebra $A$. Suppose, there exists a bounded module operator $T_0: N \to N$ the kernel of which is not biorthogonally complemented. Then the biorthogonal complement of the kernel of $T_0$ with respect to $N$ admits a non-zero bounded $A$-linear functional $r_0: {\rm Ker}(T_0)^{\bot\bot} \to A$ such that $r_0$ is the zero functional on ${\rm Ker}(T_0)$. The map $r_0$ can be extended to a bounded $A$-linear functional on $N$. 
\end{prop}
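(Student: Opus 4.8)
The plan is to construct $r_0$ directly out of the operator $T_0$ itself, by post-composing $T_0$ with the $A$-valued inner product of $N$ so as to land in the coefficient algebra $A$. Write $K := {\rm Ker}(T_0)$ and $L := K^{\bot\bot}$. Since $K$ is a norm-closed $A$-submodule and one always has $K \subseteq K^{\bot\bot}$, the hypothesis that $K$ is \emph{not} biorthogonally complemented forces the strict inclusion $K \subsetneq L$. The first substantive observation is that $T_0$ cannot annihilate all of $L$: if $T_0(L) = \{0\}$ held, then $L \subseteq {\rm Ker}(T_0) = K$, and together with $K \subseteq L$ this would give $L = K$, contradicting non-complementation. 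Hence I may fix an element $x_0 \in L$ with $T_0(x_0) \neq 0$ in $N$, so that $\langle T_0(x_0), T_0(x_0) \rangle$ is a non-zero positive element of $A$.

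With $w := T_0(x_0) \in N$ fixed, I would define $r_0(x) := \langle w, T_0(x) \rangle$ for $x \in N$. This is the composition of the bounded $A$-linear map $T_0 \colon N \to N$ with the bounded functional $\langle w, \cdot \rangle \colon N \to A$; in particular it is a genuine bounded right $A$-module functional, since $r_0(xa) = \langle w, T_0(x)a \rangle = \langle w, T_0(x) \rangle a = r_0(x)a$, and \emph{no adjointability of $T_0$ is required}. On $K$ it vanishes, because $T_0(K) = \{0\}$ gives $r_0(x) = \langle w, 0 \rangle = 0$ there; and its restriction to $L$ is non-zero because $r_0(x_0) = \langle w, w \rangle \neq 0$ with $x_0 \in L$. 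Thus the restriction $r_0|_L$ is the desired non-zero bounded $A$-linear functional on $K^{\bot\bot} = L$ that is the zero functional on $K = {\rm Ker}(T_0)$, and the very same formula, now read on all of $N$, is the announced bounded $A$-linear extension to $N$.

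I expect essentially no analytic obstacle in this argument: the only points that need care are the qualitative step that $T_0$ is non-zero somewhere on $L$ (immediate from the strictness $K \subsetneq L$) and the routine check that the inner-product composition respects the right-module action. The conceptual weight of the statement lies elsewhere, and I would emphasize it in the writeup: the functional $r_0$ detects the gap $L \setminus K$ precisely because $T_0$ is allowed to be non-adjointable, which is exactly the mechanism by which separating functionals arise when a kernel fails to be biorthogonally closed. This also dovetails with the earlier framework, since within $L = K^{\bot\bot}$ one has $K^{\bot}_{L} = K^{\bot} \cap K^{\bot\bot} = \{0\}$, so the pair $K \subseteq K^{\bot\bot}$ is of exactly the type $M \subseteq N$ with $M^{\bot}_N = \{0\}$ considered in Lemma~\ref{lemma_basic2}, and $r_0$ is a witness to a non-trivial extension of the zero functional in that setting.
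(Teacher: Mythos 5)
Your proof is correct and follows essentially the same route as the paper's own argument: both pick an element $x_0 \in {\rm Ker}(T_0)^{\bot\bot} \setminus {\rm Ker}(T_0)$ (so that $T_0(x_0) \neq 0$) and define $r_0(\cdot) := \langle T_0(x_0), T_0(\cdot) \rangle$, observing that this bounded $A$-linear functional vanishes on ${\rm Ker}(T_0)$, is non-zero at $x_0$ since $\langle T_0(x_0), T_0(x_0) \rangle > 0$, and is given on all of $N$ by the same formula. Your justification of the existence of $x_0$ (via the strict inclusion ${\rm Ker}(T_0) \subsetneq {\rm Ker}(T_0)^{\bot\bot}$) is merely a slightly more explicit rendering of what the paper asserts directly.
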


\begin{proof}
In the situation given ${\rm Ker}(T_0)^{\bot\bot}$ contains an element $x_0 \not= 0$ that does not belong to ${\rm Ker}(T_0)$ and for which $T_0(x_0) \not= 0$. Consequently, the bounded $A$-linear functional $r_0(.) := \langle T_0(x_0), T_0(.) \rangle: N \to A$ maps ${\rm Ker}(T_0)$ to zero, but is unequal to zero on ${\rm Ker}(T_0)^{\bot\bot}$ since $\langle T_0(x_0), T_0(x_0) \rangle > 0$ by assumption. Obviously, the defined map $r_0$ can be applied to any element of $N$.
\end{proof}

\begin{prop} \label{prop_consequence2}
Let $M \subset N$ be a pair of Hilbert $A$-modules over a C*-algebra $A$ such that $M^\bot = \{ 0 \}$ and $N$ is full. Suppose, there exists a non-trivial bounded module functional $r_0: N \to A$ the kernel of which contains $M$. Then $N$ admits a bounded non-adjointable module operator $T_0: N \to N$ such that its kernel is not biorthogonally complemented and contains $M$.
\end{prop}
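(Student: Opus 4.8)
The plan is to manufacture the desired operator by promoting the functional $r_0$ to a map into $N$ through left multiplication by a fixed vector. Concretely, I would fix a suitable element $n_0 \in N$ and set $T_0(x) := n_0\, r_0(x)$ for $x \in N$. Right $A$-linearity is immediate from $r_0(xa) = r_0(x)a$, and boundedness from $\|T_0(x)\| \le \|n_0\|\,\|r_0\|\,\|x\|$, so $T_0$ is a bounded module operator on $N$. Since $r_0$ vanishes on $M$, so does $T_0$; hence $M \subseteq {\rm ker}(T_0)$, as required. Here $r_0$ is understood to be nonzero, this being the substantive case coming from Proposition \ref{prop_consequence1}.

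The first genuine point is to guarantee that $n_0$ can be chosen so that $T_0 \ne 0$. Here I would invoke the Cohen--Hewitt factorization recalled in the introduction. Writing $B := \overline{\langle N,N\rangle}$, which is a norm-closed two-sided ideal of $A$ (since $c\langle x,y\rangle = \langle xc^*, y\rangle$ and $\langle x,y\rangle c = \langle x, yc\rangle$), every $n \in N$ satisfies $n = \lim_k n\langle n,n\rangle^{1/k}$ with $\langle n,n\rangle^{1/k} \in B$, so right $A$-linearity of $r_0$ forces $r_0(n) = \lim_k r_0(n)\langle n,n\rangle^{1/k} \in \overline{AB} \subseteq B$; that is, $r_0(N) \subseteq B$. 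If no choice of $n_0$ made $T_0$ nonzero, we would have $N\cdot r_0(N) = \{0\}$, whence $\langle m,n\rangle\,r_0(x) = \langle m, n\,r_0(x)\rangle = 0$ for all $m,n,x$ and therefore $B\cdot r_0(N) = \{0\}$. Applying this to some $a_0 := r_0(x_0) \ne 0$ (which exists as $r_0 \ne 0$), and using that $a_0 \in B$ and $B$ is a $*$-algebra so that $a_0^* \in B$, yields $a_0^* a_0 = 0$, i.e.\ $a_0 = 0$ --- a contradiction. So a vector $n_0$ with $T_0 \ne 0$ exists, and I fix one.

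Next I would pin down the biorthogonal complement of the kernel. Because $M \subseteq {\rm ker}(T_0)$ and $M^\bot = \{0\}$ by hypothesis, any element orthogonal to ${\rm ker}(T_0)$ is orthogonal to $M$, hence zero; that is, ${\rm ker}(T_0)^\bot = \{0\}$ and consequently ${\rm ker}(T_0)^{\bot\bot} = N$. As $T_0 \ne 0$, we have ${\rm ker}(T_0) \subsetneq N = {\rm ker}(T_0)^{\bot\bot}$, so the kernel of $T_0$ is not biorthogonally complemented while still containing $M$.

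Finally, non-adjointability comes for free: by part (i) of the Lemma preceding Proposition \ref{prop_consequence1}, the kernel of any bounded adjointable operator is biorthogonally complemented, so an operator whose kernel fails this property --- as $T_0$ does --- cannot be adjointable. This completes the construction. I expect the only delicate step to be the containment $r_0(N) \subseteq \overline{\langle N,N\rangle}$ and the attendant choice of $n_0$; everything else is forced by the hypothesis $M^\bot = \{0\}$.
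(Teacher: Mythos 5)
Your proof is correct and follows essentially the same route as the paper, which likewise builds the operator from the family $T_{0,x}(z) = x\, r_0(z)$, $x \in N$, and deduces non-adjointability from the kernel's failure to be biorthogonally complemented. Your Cohen--Hewitt argument that $r_0(N) \subseteq \overline{\langle N,N\rangle}$, which guarantees a choice of $n_0$ with $T_0 \neq 0$, carefully fills in a step the paper merely asserts with a cross-reference.
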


\begin{proof}
Note, that any C*-algebra $A$ can be considered as a Hilbert $A$-module over itself, so any bounded $A$-linear functional $r: M \to A$ can be considered as a bounded $A$-linear module map. Thus, any pair of non-self-dual Hilbert $A$-modules $M,N$ with $M \subset N$, $M^\bot = \{ 0 \}$ and an existing non-zero bounded $A$-linear functional $r_0: N \to A$ vanishing on $M$ gives rise to ${\rm Ker}(r_0) \not= {\rm Ker}(r_0)^{\bot\bot}$ inside $N$. Consider the set of operators $\{ T_{0,x,a} : T_{0,x,a}(\cdot) = x ( a \cdot r_0(\cdot)) \,\, {\rm with} \,\, x \in N, a \in A \}$. The set $\{ a \cdot r_0(z) : z \in N, a \in A \}$ forms a two-sided ideal in $A$ by the right $A$-linearity of $r_0$, by the right $A$-module property of $N$, by the free choice of $a \in A$ and by the supposition $\langle N,N \rangle = A$. In case $\langle x,x \rangle$ would be orthogonal to the set  $\{ a \cdot r_0(z) : z \in N, a \in A \}$ for any $x \in N$ the two-sided ideal $\{ a \cdot r_0(z) : z \in N, a \in A \}$ would have to consist only of the zero element of $A=\langle N,N \rangle$, a contradiction to the non-triviality of $r_0$. So, the set  $\{ T_{0,x,a} : T_{0,x,a}(\cdot) = x (a \cdot r_0(\cdot)) \,\, {\rm with} \,\, x \in N, a \in A\}$ contains at least one non-zero bounded non-adjointable module operator $T_0$ on $N$ with a not biorthogonally complemented kernel containing ${\rm Ker}(r_0) \supseteq M$.
\end{proof}

Therefore, for a given C*-algebra $A$ and a given full Hilbert $A$-module $N$ the problem of the existence of bounded $A$-linear functionals $r_0$ possessing a not biorthogonally closed kernel that admits a trivial orthogonal complement is tightly connected to the problem of the existence of bounded $A$-linear operators $T_0$ on $N$ possessing a not biorthogonally closed kernel that admits a trivial orthogonal complement. 

\begin{thm} (cf.~\cite[Lemma 2.4]{F_2002})
The kernel of any bounded $A$-linear operator between two Hilbert $A$-modules over a monotone complete C*-algebra $A$ is biorthogonally complemented. 
\end{thm}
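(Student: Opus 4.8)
The plan is to reduce the kernel-complementation statement to the functional-extension result of Theorem~\ref{theorem_ok}, exploiting the equivalence recorded in Propositions~\ref{prop_consequence1} and~\ref{prop_consequence2}. Let $T\colon M \to N$ be a bounded $A$-linear operator and set $K := {\rm Ker}(T)$. Since the inclusion $K \subseteq K^{\bot\bot}$ always holds inside $M$, it suffices to exclude the strict inclusion $K \subsetneq K^{\bot\bot}$, so I would argue by contradiction and assume the existence of an element $x_0 \in K^{\bot\bot} \setminus K$.

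First I would verify that the pair $K \subseteq K^{\bot\bot}$ satisfies the hypothesis of Theorem~\ref{theorem_ok}, namely $K^{\bot}_{K^{\bot\bot}} = \{0\}$. This is the routine biorthogonality fact: an element of $K^{\bot\bot}$ orthogonal to all of $K$ lies in $K^{\bot} \cap (K^{\bot})^{\bot}$, hence is orthogonal to itself and therefore zero. With this in hand I would produce the forbidden functional explicitly, exactly as in Proposition~\ref{prop_consequence1}: since $x_0 \notin K = {\rm Ker}(T)$ we have $T(x_0) \neq 0$, so the map $r_0 \colon K^{\bot\bot} \to A$ defined by $r_0(x) := \langle T(x_0), T(x) \rangle$ is a bounded $A$-linear functional that vanishes on $K$ (where $T(x)=0$) yet satisfies $r_0(x_0) = \langle T(x_0), T(x_0)\rangle > 0$, so it is not the zero functional on $K^{\bot\bot}$.

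Applying Theorem~\ref{theorem_ok} to the pair $K \subseteq K^{\bot\bot}$ then delivers the contradiction: over a monotone complete C*-algebra the zero functional on the submodule $K$ admits only the zero functional on $K^{\bot\bot}$ as its bounded $A$-linear continuation, which is incompatible with the existence of $r_0$. Hence $K = K^{\bot\bot}$. The step requiring care — and, I expect, the only genuine obstacle — is the applicability of Theorem~\ref{theorem_ok} to $K \subseteq K^{\bot\bot}$ in the absence of an a priori fullness assumption. Here one passes to the common central carrier projection $p$ of $\langle K, K\rangle$ and $\langle K^{\bot\bot}, K^{\bot\bot}\rangle$ furnished by Lemma~\ref{lemma_basic}(i), works inside the monotone complete corner $pA$, and uses that $r_0$ already takes its values in $\langle K^{\bot\bot}, K^{\bot\bot}\rangle \subseteq pA$, so that all inner products and $A$-duals remain within this monotone complete setting. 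It is precisely this dependence on monotone completeness — through the $\tau_2^o$-completion and self-duality machinery underlying Theorem~\ref{theorem_ok} — that is unavailable for general C*-algebras, which explains why the argument does not extend beyond the monotone complete (and, analogously, the compact) case.
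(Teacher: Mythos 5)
Your proposal is correct and follows essentially the same route as the paper, which proves the theorem precisely by combining Proposition~\ref{prop_consequence1} (the construction $r_0(\cdot)=\langle T(x_0),T(\cdot)\rangle$ on ${\rm Ker}(T)^{\bot\bot}$) with Theorem~\ref{theorem_ok} applied to the pair ${\rm Ker}(T)\subseteq{\rm Ker}(T)^{\bot\bot}$. Your additional verification that $K^{\bot}_{K^{\bot\bot}}=\{0\}$ and the remark on carrier projections only make explicit what the paper leaves implicit.
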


The proof is a combination of Theorem \ref{theorem_ok} and Proposition \ref{prop_consequence1}. In total, we finally found a correct proof of \cite[Lemma 2.4]{F_2002} for monotone complete C*-algebras and for compact C*-algebras. The statement \cite[Lemma 2.4]{F_2002} is false in the general C*-case by \cite{KS}.


\section{Extension of $C^*$-linear functionals: the compact C*-case}

\bigskip
Among the C*-algebras $A$ for which the category of Hilbert C*-modules over them admits most of the positive properties one needs for easy applications, is the class of compact C*-algebras, i.e. those that admit a faithful $*$-representation in some C*-algebra of compact operators on a suitable Hilbert space. A similar result from a slightly different point of view has been obtained by V.~M.~Manuilov, cf.~\cite[Thm.~10]{M}.

\begin{thm}
Let $A$ be a compact C*-algebra and $M \subseteq N$ be two Hilbert $A$-modules. Suppose that $M \subseteq N$ has the orthogonal complement $M^\bot_N=\{0\}$ with respect to $N$.
Then the bounded $A$-linear zero functional of $M$ to $A$ admits a unique extension to a bounded $A$-linear functional of $N$ to $A$ such that it vanishes on $M$ -- the zero functional, because $M=N$. The more, the kernel of every bounded $A$-linear operator mapping it to another Hilbert $A$-module is biorthogonally complemented in it. 
\end{thm}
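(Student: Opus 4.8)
The plan is to reduce the assertion to the elementary building blocks of a compact C*-algebra and to the Hilbert space situation, and then to read off the kernel statement from the equivalence established in Section 3. First I would invoke the structure theory of compact C*-algebras: every such $A$ is isometrically $*$-isomorphic to a $c_0$-direct sum $\bigoplus_{\lambda \in \Lambda}^{c_0} {\rm K}(H_\lambda)$ of elementary C*-algebras, and its multiplier algebra ${\mathbb M}(A) = \bigoplus_{\lambda}^{\ell^\infty} {\rm B}(H_\lambda)$ is a W*-algebra containing the central projections $p_\lambda$ onto the individual summands. Since by the Cohen--Hewitt factorization recalled in the Introduction any Hilbert $A$-module is at the same time a Hilbert ${\mathbb M}(A)$-module, the cutdowns $M_\lambda := M p_\lambda \subseteq N_\lambda := N p_\lambda$ are well-defined Hilbert ${\rm K}(H_\lambda)$-modules, and the hypothesis $M^\bot_N = \{0\}$ localizes: an element of $N_\lambda$ orthogonal to all of $M_\lambda$ is, after multiplication by $p_\lambda$, orthogonal to all of $M$ in $N$, hence zero. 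Thus $(M_\lambda)^\bot_{N_\lambda} = \{0\}$ for every $\lambda$.

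The second step is the elementary case $A = {\rm K}(H)$. Here I would use the (strong) Morita equivalence of ${\rm K}(H)$ with $\mathbb C$ recalled in the Introduction: the category of Hilbert ${\rm K}(H)$-modules is equivalent to the category of Hilbert spaces by an equivalence that preserves the inner-product lattice, in particular closed submodules, orthogonal complements, and adjointable operators. Under this equivalence $M_\lambda \subseteq N_\lambda$ corresponds to a pair of Hilbert spaces $\mathcal H_{M_\lambda} \subseteq \mathcal H_{N_\lambda}$ with trivial orthogonal complement; since a closed subspace of a Hilbert space with vanishing orthogonal complement is the whole space, $\mathcal H_{M_\lambda} = \mathcal H_{N_\lambda}$, and therefore $M_\lambda = N_\lambda$.

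Third I would glue the summands. For $n \in N$ write $n = x \cdot a$ with $x \in N$, $a \in A$ by Cohen--Hewitt factorization; since $a$ lies in the $c_0$-direct sum, its finite central truncations $a\,p_F$ (with $p_F = \sum_{\lambda \in F} p_\lambda$, $F \subseteq \Lambda$ finite) converge to $a$ in norm, whence $n\,p_F \to n$ in $N$. Each $n\,p_F$ lies in $\bigoplus_{\lambda \in F} N_\lambda = \bigoplus_{\lambda \in F} M_\lambda \subseteq M$, and $M$ is norm-closed, so $n \in M$. Hence $M = N$, from which the first assertion is immediate: the only bounded $A$-linear functional on $N = M$ that vanishes on $M$ is the zero functional, so the zero functional on $M$ extends uniquely (and trivially) to $N$.

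Finally, the kernel statement follows from the equivalence of the two problems established in Section 3 rather than from a fresh computation. If some bounded $A$-linear operator $T_0$ between Hilbert $A$-modules over the compact algebra $A$ had a kernel that is not biorthogonally complemented, then Proposition \ref{prop_consequence1} would produce a non-zero bounded $A$-linear functional on the pair ${\rm Ker}(T_0) \subseteq {\rm Ker}(T_0)^{\bot\bot}$ vanishing on ${\rm Ker}(T_0)$, a pair whose smaller member has trivial orthogonal complement in the larger by construction; this contradicts the first part of the theorem applied to that pair. Hence every such kernel is biorthogonally complemented. The step I expect to be the most delicate is the localization-and-gluing argument of the first and third paragraphs: one must check carefully that the cutdowns by the central multiplier projections really do give Hilbert ${\rm K}(H_\lambda)$-modules whose orthogonal-complement condition is inherited, and that the $c_0$-convergence of the truncations $n\,p_F$ is legitimate, so that the fibrewise equalities $M_\lambda = N_\lambda$ assemble to the global equality $M = N$.
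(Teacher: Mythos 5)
Your proof is correct, but it follows a genuinely different route from the paper's. The paper settles both claims with a single citation: over a compact C*-algebra, every Hilbert C*-submodule isometrically embedded in another Hilbert C*-module is an orthogonal direct summand (Magajna, Gulja{\v{s}}); hence $M$, having trivial orthogonal complement, equals $N$, and the kernel of any bounded module map, being a closed submodule, is itself a summand and therefore biorthogonally closed --- no appeal to Section 3 is made at all. You instead re-derive exactly the special case of that complementation theorem that is needed here: decompose $A$ as a $c_0$-sum of elementary blocks ${\rm K}(H_\lambda)$, cut $M \subseteq N$ down by the central multiplier projections $p_\lambda$ (legitimate, since $N$ is an ${\mathbb M}(A)$-module and multiplication by a central projection is a self-adjoint idempotent module map, so the cutdowns are closed submodules), localize the trivial-complement hypothesis (your computation is sound: $\langle n_\lambda, m \rangle = \langle n_\lambda, m \rangle p_\lambda = \langle n_\lambda, m p_\lambda \rangle = 0$ by centrality of $p_\lambda$), settle the elementary case by the Morita equivalence of ${\rm K}(H_\lambda)$ with ${\mathbb C}$, and glue via Cohen--Hewitt factorization and norm-convergence of the finite central truncations. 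What your route buys is transparency: it exhibits why compactness of $A$ makes the Hilbert-space intuition fibrewise literally true, and it is independent of the Gulja{\v{s}} embedding theorem. What it costs is that your pivotal step --- that the Morita equivalence between Hilbert ${\rm K}(H)$-modules and Hilbert spaces preserves closed submodules and orthogonal complements --- is a known but nontrivial fact of essentially the same depth as the Baki\'c--Gulja{\v{s}}/Magajna results the paper quotes, so you have traded one citation for another and added the (correctly executed) localization-and-gluing work. Two smaller remarks on your final step: once the first part is proved, you can apply it directly to the pair ${\rm Ker}(T_0) \subseteq {\rm Ker}(T_0)^{\bot\bot}$, whose relative orthogonal complement is trivial because ${\rm Ker}(T_0)^{\bot} \cap {\rm Ker}(T_0)^{\bot\bot} = \{0\}$, obtaining ${\rm Ker}(T_0) = {\rm Ker}(T_0)^{\bot\bot}$ outright, so the detour through Proposition \ref{prop_consequence1} is unnecessary; and if you do invoke Proposition \ref{prop_consequence1}, note that it is stated for operators $T_0 \colon N \to N$, whereas the theorem concerns operators into another Hilbert $A$-module --- harmless, since the functional $\langle T_0(x_0), T_0(\cdot) \rangle$ used in its proof makes sense for any codomain, but worth a sentence.
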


\begin{proof} 
For a compact C*-algebra $A$ and any Hilbert $A$-module over it we have two nice properties: $\,$(i) There exists an orthogonal basis of it, i.e. a generating set $\{ x_\alpha : \alpha \in I \}$ in it such that $\langle x_\alpha, x_\beta \rangle = 0$ for any $\alpha \not= \beta$ in $I$ and $\langle x_\alpha, x_\alpha \rangle = p_\alpha$ with $p_\alpha$ an non-zero atomic projection in $A$ (cf.~\cite[Thm.~2, Thm.~4]{BG_2002}); $\,$(ii) the Hilbert $A$-module is an orthogonal direct summand whenever it is isometrically embedded into another Hilbert $A$-module as a Hilbert $A$-submodule (cf.~\cite{Magajna, Guljas_2019}). Therefore, $M$ is a direct orthogonal summand of $N$ with trivial orthogonal complement, i.e. $M$ and $N$ coincide. This forces the isometric coincidence of their $A$-dual Banach $A$-modules. 

The kernel of any bounded $A$-linear operator on the Hilbert $A$-module under consideration is a Hilbert $A$-submodule of it, and hence, an orthogonal direct summand of it (cf.~\cite{Magajna}). In particular, it coincides with its biorthogonal complement. 
\end{proof}


\section{The situations for one-sided maximal modular ideals of general C*-algebras and for other norm-closed ideals}

\bigskip
Let us return to the class of examples of right norm-closed maximal ideals $D$ of C*-algebras $A$. We would like to consider the class of modular right maximal ideals $D \subset A$, i.e. of such ideals for which there exists an element $u_D \in A$ such that $(a-u_Da) \in D$ for any $a \in A$. They are automatically norm-closed, cf. \cite[Thm.~1.3.1]{Murphy}. For C*-algebras even more is known: every maximal right ideal of a C*-algebra is norm-closed (\cite[Cor.~3.6]{CGDRP}), every maximal right ideal of a Fr\'echet algebra is modular (\cite[Thm.~2.2.42, Prop.~4.10.23]{D}), and in the commutative C*-case the codimension is 1 (\cite[Thm.~2.4(i)]{CGDRP}, Gel'fand-Mazur theorem).

\begin{thm}
Let $A$ be a C*-algebra and $D$ be a right modular maximal ideal in $A$. Consider them as Hilbert $A$-modules in the usual way transferring the algebraic properties of $A$ to modular and $A$-valued inner product structures on both $A$ and $D \subset A$. Then the zero functional on the Hilbert $A$-submodule $D$ has only the zero modular functional on its biorthogonal complement $D^{\bot\bot}$ in the hosting Hilbert $A$-module $A$ as its continuation.
\end{thm}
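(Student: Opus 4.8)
My plan is to reduce the statement to the structure theory of the multiplier algebra and to exploit the modularity of $D$ in an essential way. Recall that $D \subset A$ is a right modular maximal ideal, so there is an element $u_D \in A$ with $(a - u_D a) \in D$ for every $a \in A$; equivalently, $u_D$ acts as a left identity modulo $D$. The key object to control is the biorthogonal complement $D^{\bot\bot}$ taken inside $A$ regarded as a Hilbert $A$-module over itself. I would begin by computing the orthogonal complement $D^\bot_A = \{ a \in A : a^* d = 0 \text{ for all } d \in D \}$, which by the $A$-valued inner product $\langle a, b\rangle = a^* b$ is precisely the set of $a$ whose adjoint left-annihilates $D$. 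The first step is therefore to determine this left annihilator of $D$ in $A$ and, from it, the biorthogonal complement $D^{\bot\bot}$.

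The central reduction I would carry out is the following: a nonzero bounded $A$-linear functional $r_0 : D^{\bot\bot} \to A$ vanishing on $D$ exists if and only if $D$ fails to be biorthogonally closed inside its own biorthogonal complement, i.e.\ $D \subsetneq D^{\bot\bot}$ with $D^\bot_{D^{\bot\bot}} = \{0\}$. This is exactly the dictionary established in Proposition~\ref{prop_consequence1} and Proposition~\ref{prop_consequence2} and in Lemma~\ref{lemma_basic}, (i)--(ii). So the theorem is equivalent to proving that $D$ \emph{is} biorthogonally closed in $A$, i.e.\ $D = D^{\bot\bot}$. I would attack this by separating two cases according to the structure of the left annihilator. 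If the left annihilator of $D$ is nonzero, I expect it to contain a projection $p$ (or a nonzero hereditary piece) that splits off $D$ as a genuine orthogonal summand, forcing $D^{\bot\bot} = D$ trivially. The substantive case is when $D^\bot_A = \{0\}$, so that $D$ is full and the ambient module $A$ has $D^{\bot}_A = \{0\}$; here I must show directly that the zero functional on $D$ admits only the zero extension to $D^{\bot\bot}$, which forces $D^{\bot\bot} = A$ to collapse back onto $D$ via maximality.

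The role of modularity enters precisely here. Suppose $r_0 : A \to A$ (or on $D^{\bot\bot}$) is bounded $A$-linear with $r_0(D) = 0$. Using the modular identity, for any $a \in A$ write $a = u_D a + (a - u_D a)$ with $(a - u_D a) \in D$, so that $r_0(a) = r_0(u_D a) = r_0(u_D) a$, because $r_0$ is right $A$-linear. Hence $r_0$ is completely determined by the single element $c := r_0(u_D) \in A$, and $r_0(\cdot) = c \cdot (\cdot)$ is automatically \emph{adjointable} (it is left multiplication by $c$). But an adjointable functional on a Hilbert $A$-module is represented by an element of the module by Lemma~\ref{lemma_basic}, (ii), contradicting the existence of a \emph{non}-representable separating functional unless $c = 0$. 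Thus modularity forces every bounded $A$-linear extension of the zero functional to be given by left multiplication and hence adjointable, collapsing the obstruction.

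The main obstacle I anticipate is the passage from ``$r_0$ vanishes on $D$ and is determined by $c = r_0(u_D)$'' to the conclusion $c = 0$. The computation $r_0(a) = r_0(u_D) a$ shows $r_0(d) = c\,d = 0$ for all $d \in D$, so $c$ lies in the left annihilator of $D$; I must then argue that the left annihilator of a modular maximal right ideal is zero (the hard part, requiring that $u_D$ be a left identity modulo $D$ together with maximality and the $C^*$-structure), or else that any such $c$ yields a representing element inside $D^{\bot\bot}$, contradicting $D^\bot_{D^{\bot\bot}} = \{0\}$ via Lemma~\ref{lemma_basic}, (ii). This annihilator computation is where the ``deep dive into $C^*$-theory'' promised in the introduction is really needed: I would invoke the maximality of $D$ to show $D + \overline{\mathrm{span}}\,\langle c \rangle$ is either $D$ or all of $A$, and rule out the latter using that $c$ annihilates $D$ on the left while $u_D$ acts as a modular unit, thereby forcing $c \in D$ and then $c = c\,u_D' \in cD = 0$ for an approximate modular unit, giving $c = 0$.
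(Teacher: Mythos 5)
Your core argument is correct, and it takes a genuinely different --- and more elementary --- route than the paper. The paper proves this theorem by invoking the bijection between pure states and modular maximal right ideals, passing to the universal representation, and analyzing the atomic projection $p_\rho \in A^{**}$ carrying the left annihilator of $D$, with the final dichotomy being whether $p_\rho$ lies in $A$ (matrix-block case) or not. Your proof needs none of that: the identity $r_0(a) = r_0\bigl(u_D a + (a - u_D a)\bigr) = r_0(u_D)\,a$ shows that modularity forces any $A$-linear extension of the zero functional to be left multiplication by $c := r_0(u_D)$, and $r_0(D) = \{0\}$ puts $c$ in the left annihilator $L$ of $D$. Now split on $D^\bot_A$: if $D^\bot_A \neq \{0\}$, then $D^{\bot\bot}$ is a norm-closed proper right ideal of $A$ (proper since $D^{\bot\bot} \cap D^\bot = \{0\}$) containing the maximal ideal $D$, so $D^{\bot\bot} = D$ by maximality and the statement is trivial; if $D^\bot_A = \{0\}$, then $D^{\bot\bot} = A$, your computation applies on all of $A$, and $c \in L = \{0\}$ because $a \in D^\bot_A$ iff $a^*D = \{0\}$, i.e.\ $D^\bot_A = L^*$. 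This is shorter than the paper's argument and does not even use boundedness of $r_0$; what the paper's heavier machinery buys instead is the explicit structural description of $D^{\bot\bot}$ recorded in the Remark following the theorem.

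Two defects in your write-up should be repaired, though neither destroys the proof. First, your ``central reduction'' is false: the existence of a nonzero bounded $A$-linear functional on $D^{\bot\bot}$ vanishing on $D$ is \emph{not} equivalent to $D \subsetneq D^{\bot\bot}$ with trivial relative complement --- disproving exactly this implication for large classes of coefficient algebras is the point of the paper (and of the Kaad--Skeide example), and Propositions \ref{prop_consequence1} and \ref{prop_consequence2} relate separating functionals to non-biorthogonally-complemented kernels of \emph{operators}, not to the failure of $D = D^{\bot\bot}$. Fortunately you never actually use this claimed equivalence: in your substantive case you prove uniqueness of the extension directly, which is the correct goal --- and necessarily so, since there $D^{\bot\bot} = A \neq D$, so the reduction you announce could not possibly be carried out. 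Second, the ``main obstacle'' you anticipate does not exist. You do not need to show that the left annihilator of an arbitrary modular maximal right ideal vanishes; indeed it need not (for $D = (1-e_{11})M_2(\mathbb C) \subset M_2(\mathbb C)$ the left annihilator is $M_2(\mathbb C)e_{11}$). You need it only in the case $D^\bot_A = \{0\}$, where it is immediate from $D^\bot_A = L^*$. Your closing argument (``forcing $c \in D$ and then $c = c\,u_D' \in cD = 0$'') is therefore unnecessary, and it is also broken as written: nothing gives $c = c\,u_D'$. Replace both worries by the one-line observation $c \in L = (D^\bot_A)^* = \{0\}$, and replace the hand-waving in your first case (a projection in the annihilator splitting off $D$ --- true, but proving it requires precisely the $A^{**}$ machinery you are trying to avoid) by the maximality argument above.
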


\begin{rmk}  {\rm
In case the C*-algebra $A$ contains a finite-dimensional (matrix) block and the left annihilator projection $p \in A^{**}$ of $D$ is an atomic projection in this finite-dimensional block of $A \subseteq A^{**}$, then $D^{\bot\bot}=(1-p)A \subset A$ (but of $A^{**})$. Otherwise, the left annihilator projection of $D$ in $A^{**}$ is not an element of $A$, and $D^{\bot\bot}=A$. }
\end{rmk}

\begin{proof}
By \cite[Thm.~5.3.5]{Murphy} there exists a bijection between the set of pure states $\rho$ on $A$ and the set of all modular maximal right ideals $N_\rho$ of $A$, where $N_\rho = \{ a \in A : \rho(aa^*)=0 \}$. The inverse mapping is induced by a factorization of $A$ by a modular maximal right ideal $D$ resulting in a one-dimensional $*$-representation of $A$ with a cyclic vector that induces the pure state.  Considering the universal $*$-representation $\pi_u$ of $A$ on a Hilbert space $H_u$ as defined in \cite[3.7.6]{Pedersen}, these one-dimensional $*$-representations of $A$ are direct summands of $\pi_u$. So the bicommutant of $\pi_u(A)$, a von Neumann algebra, containes atomic projections $p_\rho$ in its type I part that project onto these related one-dimensional $*$-representation spaces and realize any pure state $\rho$ on $A$ that way. The more, the enveloping von Neumann algebra $\pi_u(A)''$ of $A$ is isomorphic, as a Banach space, to the second dual Banach space $A^{**}$ of $A$, cf. \cite[Thm.~3.7.8]{Pedersen}. That is, we have a one-to-one relation between the set of all modular maximal right ideals $N_\rho$ of $A$ and atomic projections $p_\rho$ of $A^{**}=\pi_u(A)''$ as $\pi_u(N_\rho)^{\bot\bot} = (1-p_\rho)A^{**}$ in the sense of taking the left annihilator of $\pi_u(N_\rho)$ and then the right annihilator. 

So the initial problem of the assertion above translates into the question of the nature of the intersection of $\pi_u(A) p_\rho$ with $A^{**} p_\rho$. Since $p_\rho A^{**} p_\rho= {\mathbb C}$, either $p_\rho \in \pi_u(A)$, i.e. in the (existing in this case) matrix part of $A$, and $p_\rho \pi_u(D)$ as a Hilbert $A$-submodule of $p_\rho \pi_u(A)$ is a direct orthogonal summand of $p_\rho \pi_u(A)$, or $\pi_u(A) p_\rho \cap A^{**} p_\rho = \{ 0 \}$. In the first case $p_\rho \pi_u(D)$ is self-dual, so it contains already all modular continuations of the zero functional to its biorthogonal complement in $p_\rho\pi_u(A)$. In the second case we end up with the zero element as the unique continuation. So the result follows.
\end{proof}

\begin{prop} \label{prop_commutative}
Let $A$ be a commutative C*-algebra and $I \subseteq J$ be two essential norm-closed ideals of $A$. Then:
\begin{enumerate}
\item The left and the right annihilator sets of $I$ w.r.t. $J$ and of $I$ or $J$ w.r.t. $A$ consist only of the zero element. 
\item The multiplier algebra of $J$ is isometrically $*$-algebraically represented in the multiplier algebra of $I$, i.e. ${\mathbb M}(J) \subseteq {\mathbb M}(I)$. 
\item Suppose, $I$ and $J$ are considered as (right) Hilbert $A$-modules. Then any bounded $A$-linear functional from $J$  to $A$ which is supposed to be the zero functional on $I$ equals to zero on $J$.
\end{enumerate}
\end{prop}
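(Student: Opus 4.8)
The plan is to work entirely in the Gelfand picture. Writing $A = C_0(X)$ for a locally compact Hausdorff space $X$, the norm-closed ideals of $A$ correspond to open subsets, so I may take $I = C_0(U)$ and $J = C_0(V)$ for open sets $U \subseteq V \subseteq X$ (the inclusion $I \subseteq J$ being equivalent to $U \subseteq V$). An ideal is essential precisely when its open set is dense, so both $U$ and $V$ are dense in $X$; in particular $U$ is dense in $V$, and this density is the single fact that drives all three parts. Part (i) is then immediate: if $a \in A$ annihilates $I$, then for each $x \in U$ choosing $f \in C_0(U)$ with $f(x)\neq 0$ gives $a(x)f(x)=0$, so $a$ vanishes on $U$ and hence, by density, on $\overline{U}=X$; thus $a=0$. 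The identical argument treats $J$, and the annihilator of $I$ inside $J$ is contained in its annihilator inside $A$. Since $A$ is commutative, left and right annihilators agree, so every annihilator set in the statement reduces to $\{0\}$.

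For (ii) I would identify $\mathbb{M}(I) = C_b(U)$ and $\mathbb{M}(J) = C_b(V)$ and realize the asserted embedding as the restriction map $C_b(V)\to C_b(U)$, $h\mapsto h|_U$. This is a $*$-homomorphism; it is injective because two continuous functions agreeing on the dense set $U\subseteq V$ coincide, and it is isometric because $\sup_U|h|=\sup_V|h|$ for $h\in C_b(V)$, again by density. One may equally invoke the general principle, recorded in the Remark following Corollary \ref{cor_ok1}, that an essential two-sided ideal $I$ of a C*-algebra $J$ yields a canonical isometric inclusion $\mathbb{M}(J)\subseteq\mathbb{M}(I)$; here $I$ is essential in $J$ precisely because its annihilator in $A\supseteq J$ already vanishes by (i).

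The substance of the proposition is (iii), and here the key step is to represent the functional. Given a bounded $A$-linear $r:J\to A$, commutativity gives the symmetry $r(f)\,g = r(fg)=r(gf)=r(g)\,f$ for all $f,g\in J$. Fixing $x\in V$ and choosing $g\in C_0(V)$ with $g(x)\neq0$, this relation forces $r(f)(x)=0$ whenever $f(x)=0$ and shows the quotient $h(x):=r(g)(x)/g(x)$ to be independent of $g$; hence $r(f)=h\cdot f$ on $V$ for a well-defined function $h$. This $h$ is continuous (locally a ratio of continuous functions with non-vanishing denominator) and satisfies $|h(x)|\le\|r\|$ (test against a normalized bump function at $x$), so $h\in C_b(V)=\mathbb{M}(J)$ and $r$ is multiplication by $h$. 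The hypothesis that $r$ vanishes on $I$ says $hf=0$ for every $f\in C_0(U)$, forcing $h|_U=0$; since $U$ is dense in $V$ and $h$ is continuous, $h\equiv0$ on $V$, whence $r=0$ on $J$. Equivalently, $r$ is an element of $\mathbb{M}(J)$ lying in the kernel of the injective restriction map of (ii), hence zero.

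The step I expect to be the main obstacle is exactly this representation of the functional: establishing that an arbitrary bounded $A$-linear $r:J\to A$ is genuinely multiplication by one bounded continuous function on $V$ -- that is, identifying the dual module $J'$ with $\mathbb{M}(J)=C_b(V)$ -- and verifying with care that the pointwise-defined $h$ is continuous and bounded across all of $V$, not merely on the open locus where a single test function is non-zero. Once $r$ is known to be multiplication by a continuous $h$, the density of $U$ in $V$ closes the argument at once, and the commutativity of $A$ enters decisively through the symmetry relation that produces $h$ in the first place.
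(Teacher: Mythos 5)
Your overall strategy is the same as the paper's --- both proofs turn on representing every bounded $A$-linear functional $r\colon J \to A$ by a multiplier of $J$ and then killing that multiplier against the essential ideal $I$ --- but you execute it differently: the paper simply cites the identification of the $A$-dual module $J'$ with ${\mathbb M}(J)$ (valid in the commutative case, where ${\mathbb L}{\mathbb M}(J)={\mathbb M}(J)$) together with the inclusion ${\mathbb M}(J)\subseteq{\mathbb M}(I)$ from Ara--Mathieu, whereas you reconstruct the multiplier by hand in the Gelfand picture from the symmetry relation $r(f)g=r(g)f$. That makes your argument self-contained and elementary where the paper's is citation-driven, and your parts (i) and (ii) are correct as written. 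The price of doing the key step by hand is that every property of the representing function $h$ must be supplied explicitly, and one such property is missing.

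The gap: you prove $r(f)(x)=h(x)f(x)$ only for $x\in V$, and from $h|_V\equiv 0$ you conclude that $r=0$ on $J$. This does not follow yet: $r(f)$ is an element of $C_0(X)$, and nothing in your pointwise construction excludes $r(f)$ being nonzero at points of $X\setminus V$. What is still needed is precisely the invariance $r(J)\subseteq J$ --- equivalently, that $r$ \emph{is} multiplication by $h$ globally, not merely on $V$ --- which is exactly the portion of the identification $J'\cong{\mathbb M}(J)$ that your construction does not capture (and it is also the point where the noncommutative case breaks down, as the paper remarks after the proposition, since there $J'$ is only ${\mathbb L}{\mathbb M}(J)$). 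Fortunately the repair is one line: every $f\in C_0(V)$ factors as $f=gk$ with $g,k\in C_0(V)$ (take $g=|f|^{1/2}$ and $k=f/|f|^{1/2}$, set to $0$ on the zero set of $f$; since $|k|=|f|^{1/2}$, the function $k$ is continuous and vanishes at infinity), whence $A$-linearity gives $r(f)=r(g)k$, which vanishes wherever $k$ does, in particular on $X\setminus V$. Combined with $r(f)|_V=hf|_V=0$, this yields $r(f)=0$ for every $f\in J$, completing your proof.
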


\begin{proof}
Start with the basic situation of two essential norm-closed ideals $I \subseteq J$ of $A$. 
Since norm-closed ideals of a commutative C*-algebra are C*-algebras themselves, any one-sided annihilator of $I$ or $J$ forces its adjoint to be an one-sided annihilator of $I$ or $J$ from the other side, simply by commutativity of $A$. The property of $I$ and of $J$ to be essential norm-closed ideals of $A$ yields one-sided annihilators of them w.r.t. $J$ and/or to $A$ to be the set $\{ 0_A \}$. By commutativity of $A$ any one-sided multiplier of $I$ or $J$ is in fact a two-sided multiplier.

Moreover, one has ${\mathbb M}(J) \subseteq {\mathbb M}(I)$ in the sense of an injective $*$-algebraical inclusion of C*-algebras, cf.~\cite[Prop.~1.2.20, Def.~2.3.1, Prop.~2.3.4]{Ara_Mathieu}. Any bounded $A$-linear functional $r: J \to A$ can be described as the multiplication by a fixed element $n_r \in {\mathbb M}(J)$ since the $A$-dual Banach $A$-module of $J$ can be represented as ${\mathbb M}(J)$ in that way. Therefore, $n_r$ can be seen as a multiplier of $I$, too. So, any bounded $A$-linear functional $r$ on $J$ vanishing on $I$ leads to an annihilator of $I$ w.r.t. $J$. And this can be only the zero element of $A$ by supposition. 
\end{proof}

We cannot transfer the proof to the non-commutative situation straightforwardly, since the $A$-dual Banach $A$-module $I'$ of $I$ has to be identified with the right multiplier algebra ${\mathbb R}{\mathbb M}(I)$ of $I$ which might be larger than ${\mathbb M}(I)$ and, therefore, might be not invariant w.r.t. involution, i.e.~might be not a $*$-algebra.


\smallskip
The search for counterexamples of special C*-functionals in the class of monotone complete C*-algebras $A$, setting $N=A$, and right norm-closed ideals $D$ in $A$, setting $M=D$, is not successful. Basically, the left annihilator of $D$ is always of the form $Ap$ with $p \in A$, a fact from AW*-algebra theory (cf.~\cite[Prop.~8.2.2]{SaitoWright_2015}). And additionally, $D$ is order-dense in $(1-p)A$. Moreover, $(1-p)A$ and $A$ are right self-dual Hilbert $A$-modules. So we can formulate a corollary.

\begin{cor}
Let $A$ be a monotone complete C*-algebra and $D$ be a right, norm-closed ideal of $A$ which is order dense in $A$. Then there does not exist any non-zero bounded $A$-linear functional $r_0: A \to A$ such that $r_0(D)= \{ 0 \}$. 
\end{cor}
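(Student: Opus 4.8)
The plan is to recognize the statement as the special case of Theorem~\ref{theorem_ok} obtained by setting $N := A$, regarded as a right Hilbert $A$-module over itself with inner product $\langle a,b\rangle = a^*b$, and $M := D$, regarded as a norm-closed Hilbert $A$-submodule. Everything in the assertion will follow once the single hypothesis $M^\bot_N = D^\bot_A = \{0\}$ is verified: then Theorem~\ref{theorem_ok} immediately tells us that the bounded $A$-linear zero functional on $D$ admits no continuation to $A$ other than the zero functional, which is exactly the non-existence of a non-zero $r_0 : A \to A$ with $r_0(D) = \{0\}$.

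First I would compute the orthogonal complement explicitly. For the canonical inner product on $A$ one has $D^\bot_A = \{ a \in A : a^* d = 0 \text{ for all } d \in D \}$, that is, $D^\bot_A = (L(D))^*$, the set of adjoints of the left annihilator $L(D) = \{ x \in A : xD = 0 \}$ of the right ideal $D$. Since every monotone complete C*-algebra is an AW*-algebra, \cite[Prop.~8.2.2]{SaitoWright_2015} furnishes a projection $p \in A$ with $L(D) = Ap$; in particular $pD = 0$, so $D \subseteq (1-p)A$, and passing to adjoints gives $D^\bot_A = pA$. It then remains to force $p = 0$.

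Next I would invoke order-density. Because $pD = 0$ we have $D \subseteq (1-p)A$, and $(1-p)A$ is a self-dual, hence order-closed, right Hilbert $A$-submodule of $A$ (as recorded in the discussion preceding the corollary); therefore the order-closure of $D$ is contained in $(1-p)A$. The hypothesis that $D$ is order-dense in $A$ says this order-closure is all of $A$, so $(1-p)A = A$ and thus $p = 0$. Consequently $D^\bot_A = pA = \{0\}$, and the hypothesis of Theorem~\ref{theorem_ok} is met; the conclusion is then a verbatim instance of that theorem.

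I expect the main obstacle to lie precisely in this last step, namely in making rigorous that order-density of $D$ in $A$ is incompatible with a non-trivial annihilating projection $p$. The cleanest route is to lean on the two structural facts already established before the corollary — that $L(D) = Ap$ for a projection $p$ and that $(1-p)A$ is order-closed as a self-dual Hilbert $A$-module — so that the density of $D$ in $A$ directly collapses $(1-p)A$ onto $A$. No further computation is needed once $D^\bot_A = \{0\}$ has been secured.
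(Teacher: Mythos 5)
Your proposal is correct and takes essentially the same route as the paper: the paper's proof is exactly to set $N=A$, $M=D$, recall $D^\bot=\{0\}$, and apply Theorem~\ref{theorem_ok}, with the AW*-annihilator fact $L(D)=Ap$ from \cite[Prop.~8.2.2]{SaitoWright_2015} and the self-duality of $(1-p)A$ invoked in the discussion immediately preceding the corollary, just as you use them. Your write-up merely spells out the step the paper leaves implicit, namely that order-density of $D$ in $A$ together with the order-closedness of the self-dual summand $(1-p)A$ forces $p=0$ and hence $D^\bot_A=\{0\}$.
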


Set $N=A$ and $M=D$, recall $D^{\bot} =\{ 0 \}$ and apply Theorem \ref{theorem_ok}. We get the desired result. Going further we consider a pair of two ideals in a monotone complete C*-algebra.

\begin{cor}
Let $A$ be a monotone complete C*-algebra, let $I$ and $J$ be two right norm-closed ideals of $A$ such that $I \subset J \subseteq A$, both with the orthogonal complement $pA$, $p=p^2 \geq 0$ in $A$. Switching to a consideration of $I$ and $J$ as Hilbert $A$-submodules of $A$, the orthogonal complement of $I$ with respect to $J$ is equal to $\{ 0 \}$. Then the right ideals $I \subset J$ have the same left carrier projection $(1-p) \in A$, and as Hilbert $A$-submodules of $A$ they fulfil $I' = J' = (1-p)A$. In particular, $I$ and $J$ are order dense in $(1-p)A$, or equivalently in the picture of Hilbert $A$-modules, $\tau^o_2$-dense. So, there does not exist any bounded $A$-linear functional $r_0: J \to A$ such that $r_0$ vanishes on $I$, but which is not the zero functional on $J$.
\end{cor}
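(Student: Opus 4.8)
The plan is to reduce the statement about the pair $I \subset J$ of right ideals to the already-established machinery of Theorem~\ref{theorem_ok} and Lemma~\ref{lemma_basic}, by first pinning down the left carrier projections of $I$ and $J$ and identifying their $A$-duals. First I would invoke the AW*-algebra annihilator structure (\cite[Prop.~8.2.2]{SaitoWright_2015}) to write the left annihilators of $I$ and of $J$ as $Ap_I$ and $Ap_J$ for projections $p_I, p_J \in A$; since both orthogonal complements with respect to $A$ are given to be $pA$, and since the hypothesis states that the orthogonal complement of $I$ \emph{with respect to $J$} is trivial, any element acting trivially on $I$ but nontrivially on $J$ would produce a nonzero element of $J$ orthogonal to $I$ inside $J$, contradicting $M^\bot_N = \{0\}$ with $M=I$, $N=J$. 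This is precisely the carrier-coincidence argument already carried out in Lemma~\ref{lemma_basic},(i), so $p_I = p_J = p$ and hence both $I$ and $J$ have the common left carrier projection $(1-p)$.

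Next I would establish the duality identities $I' = J' = (1-p)A$. The key point is that $(1-p)A$ is a right self-dual Hilbert $A$-module (being the image of a central, or at least of an AW*-annihilator, projection in a monotone complete C*-algebra), and that both $I$ and $J$ sit inside $(1-p)A$ with trivial orthogonal complement there. Applying Corollary~\ref{cor_ok1} to the inclusions $I \subseteq (1-p)A$ and $J \subseteq (1-p)A$ — each of which has trivial orthogonal complement in the self-dual hosting module $(1-p)A$ — yields $I' = ((1-p)A)' = (1-p)A$ and likewise $J' = (1-p)A$, so that $I' = J'$. The order-density (equivalently, $\tau^o_2$-density) of $I$ and of $J$ in $(1-p)A$ then follows from the intrinsic characterization of self-duality via $\tau^o_2$-completion used in the proof of Theorem~\ref{thm_isometric}: the $\tau^o_2$-completion of each of $I$, $J$ inside $(1-p)A$ must be self-dual and must have trivial orthogonal complement, hence must fill out all of $(1-p)A$.

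Finally I would deduce the non-existence of a separating functional by a direct application of Theorem~\ref{theorem_ok} to the pair $M = I \subseteq N = J$. The hypothesis $M^\bot_N = \{0\}$ is exactly what Theorem~\ref{theorem_ok} requires, and its conclusion gives $I' = J'$ together with the assertion that the zero functional on $I$ admits only the zero functional on $J$ as its continuation; this is the desired statement. I expect the one genuinely delicate step to be the carrier-projection identification in the first paragraph: one must be careful that the given data ``orthogonal complement $pA$'' refers to the complements of $I$ and of $J$ \emph{in $A$}, and separately verify that triviality of the complement of $I$ \emph{in $J$} forces $p_I = p_J$, rather than merely $p_I \geq p_J$. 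Once the common carrier $(1-p)$ is secured, the remaining steps are formal consequences of the already-proven Theorem~\ref{theorem_ok} and Corollary~\ref{cor_ok1}, so no further obstacle arises.
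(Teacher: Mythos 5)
Your proposal is correct and follows essentially the same route as the paper: you identify the common left carrier projection $(1-p)$ by observing that $p_I > p_J$ would produce a non-zero element of $J$ (namely something in $(p_I-p_J)J$) orthogonal to $I$, contradicting the trivial relative complement, and you then obtain $I'=J'=(1-p)A$ and the uniqueness of the zero-functional extension by invoking Theorem~\ref{theorem_ok} (via its Corollary~\ref{cor_ok1}) together with the self-duality of $(1-p)A$. The paper's own proof is exactly this argument, stated slightly more tersely, so no gap remains.
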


\begin{proof}
Consider the two ideals as (right) Hilbert $A$-modules $M=I$ and $N=J$ in $A$. The right annihilator of $I$ and of $J$ w.r.t. $A$ equals to zero since they are right ideals of $A$ and right $A$-modules. Suppose, the left annihilator of $I$ w.r.t. $A$ is $pA$ and of $J$ w.r.t. $A$ is $qA$. Since $I \subset J$ we have $p \geq q$. Switch to the picture of a Hilbert $A$-modules $M\subset N$. In case $p > q$ the subset $(p-q)N$ is non-zero and orthogonal to $M$, a contradiction to the supposition. So $p=q$. By Theorem \ref{theorem_ok} $M'=N'=(1-p)A$ as Hilbert $A$-modules, and the uniqueness of the continuation of the zero functional from $I$ to $J$ follows. 
\end{proof}

Summing up, we found results for the zero functional continuation problem for pairs of a Hilbert C*-submodule with trivial orthogonal complement in another Hilbert C*-module, with quite different roots. So the research about this problem has to be continued. 


\section*{Acknowledgement}

I am indebted to Vladimir M.~Manuilov and Evgenij V.~Tro\H{\i}tsky for long-lasting valuable discussions on C*-algebraic, differential-geometric and topological problems closely related to those treated in the present article, as well as for carefully examining earlier attempts to settle the zero functional continuation problem in cases. I am grateful to Orr Moshe Shalit and Michael Skeide whose persistent questions forced me to find classes of Hilbert C*-modules for which the discussed above problem can be waived. My graditute goes to Boris Gulja{\v{s}} and to Michael Skeide for reading the manuscript at an earlier stage of evolvement. 
The shown possible appearance of this kind of problems is still a source of uncertainty to me about earlier results in Hilbert C*-module theory what requires repeated reading of many prior publications. At the same time, the new points of view stimulate further research, for example, on non-adjointable module operators aka left multiplier operators.



\begin{thebibliography}{1}

\bibitem{Ara_Mathieu} {\sc P.~Ara, M.~Mathieu}.
\newblock {L}ocal {M}ultiplier {C}*-{A}lgebras.
\newblock Springer Monographs in Mathematics, Springer-Verlag London Limited, 2003.

\bibitem{Asadi_et_al} {\sc M.~B.~Asadi, M.~Frank, Z.~Hassanpour-Yakhdani}.
\newblock  {F}rameless {H}ilbert {C}*-modules.
\newblock {\em  Glasgow Math. J.} {\bf 61}(2018), 25--31.

\bibitem{BG_2002}
{\sc D.~Baki\'c, B.~Gulja{\v{s}}}. 
\newblock {H}ilbert {C}*-modules over {C}*-algebras of compact operators. 
\newblock {\em Acta Sci. Math. (Szeged)} {\bf 68}(2002),  249--269.

\bibitem{Blackadar}
{\sc B.~Blackadar}.
\newblock {O}perator {A}lgebras: {T}heory of {C}*-{A}lgebras and {V}on {N}eumann {A}lgebras.
\newblock {E}ncyclopaedia {M}ath. {S}ciences v.~122, {S}pringer, {B}erlin, 2006/2011.

\bibitem{Blecher}
{\sc D.~P.~Blecher}.
\newblock {O}n selfdual {H}ilbert modules.
\newblock in: {O}perator {A}lgebras and their {A}pplications, {\em {F}ields {I}nstitute {C}omm.}, vol.~13, pp.~65--80, Amer. Math.
Soc., Providence, RI, 1997.

\bibitem{BMSh_1994}
{\sc L.~G.~Brown, J.~Mingo, Nien-Tsu Shen}.
\newblock {Q}uasi-multipliers and embeddings of {H}ilbert {C}*-bimodules.
\newblock {\em Canad. J. Math.} {\bf 46}(1994), no.~6, 1150-1174.

\bibitem{CGDRP}
{\sc M.~Cabrera Garc\'{\i}a, H.~G.~Dales, \'A.~Rodr\'{\i}guez Palacios}.
\newblock {M}aximal left ideals in {B}anach algebras.
\newblock {\em Bull.~London Math.~Soc.} {\bf 52}(2020), issue 1, 1-15.

\bibitem{D}
{\sc H.~G.~Dales}.
\newblock {B}anach {A}lgebras and {A}utomatic {C}ontinuity.
\newblock London Mathematical Society Monographs, Volume 24 (Clarendon Press, Oxford, 2000).

\bibitem{Dixmier}
{\sc J.~Dixmier}.
\newblock On certains espaces consid\'er\'es par M.~H.~Stone.
\newblock {\em Sum. Bras. Math. II} {\bf 11}(1951), 151--182. 

\bibitem{Floyd}
{\sc E.~E.~Floyd}.
\newblock {B}oolean algebras with patological order topologies.
\newblock {\em Pacific J. Math.} {\bf 5}(1955), no.~5, 687--689.

\bibitem{Frank_1990}
{\sc M.~Frank}.
\newblock {S}elf-duality and {C}*-reflexivity of {H}ilbert {C}*-modules.
\newblock {\em Zeitschr. Anal. Anw.} {\bf 9}(1990), 165--176.

\bibitem{F_1995}
{\sc M.~Frank}.
\newblock {H}ilbert {C}*-modules over monotone complete {C}*-algebras.
\newblock{\em Math. Nachr.} {\bf 175}(1995), 61--83.

\bibitem{F_1997}
{\sc M.~Frank}.
\newblock {A} multiplier approach to the {L}ance-{B}lecher theorem.
\newblock{\em Zeitschr. Anal. Anwend.} {\bf 16}(1997), 565--573.

\bibitem{Frank_1999}  {\sc M.~Frank}.
\newblock {G}eometrical aspects of {H}ilbert {C}*-modules. 
\newblock {\em Positivity} {\bf 3}(1999), 215--243.

\bibitem{F_2002}
{\sc M.~Frank}.
\newblock {O}n {H}ahn-{B}anach type theorems for {H}ilbert {C}*-modules.
\newblock {\em Internat. J. Math.} {\bf 13}(2002), 1--19.

\bibitem{FL}
{\sc M.~Frank, D.~R.~Larson}.
\newblock {F}rames in  {H}ilbert {C}*-modules and {C}*-algebras.
\newblock {\em J.~Operator Theory} {\bf 48}(2002),  273--314.

\bibitem{Gonshor}
{\sc H.~Gonshor}.
\newblock Injective hulls of $C$* -algebras II.
\newblock {\em Proc. Amer. Math. Soc.} {\bf 24} (1970), 486--491.

\bibitem{Green_1978}
{\sc P.~Green}.
\newblock {T}he local structure of twisted covariance algebras.
\newblock {\em Acta Math.} {\bf 140}(1978), 191-250.

\bibitem{Guljas_2019}
{\sc B.~Gulja{\v{s}}}.
\newblock {O}rthogonal complementing in {H}ilbert {C}*-modules.
\newblock {\em Ann. Funct. Anal.} {\bf 10}(2019), 196--202.

\bibitem{Hamana79}
{\sc M.~Hamana}.
\newblock Injective envelopes of $C$*-algebras.
\newblock {\em J. Math. Soc. Japan} {\bf 31}(1979),  no.~1, 181--197.

\bibitem{Hamana82}
{\sc M.~Hamana}.
\newblock {T}ensor products for monotone complete $C$*-algebras, I.
\newblock {\em Japan. J. Math.} {\bf 8}(1982),  no.~2, 259--283.

\bibitem{Hamana92}
{\sc M.~Hamana}.
\newblock Modules over monotone complete C*-algebras.
\newblock {\em  Internat. J. Math.} {\bf 3}(1992),  no.~2, 185--204.

\bibitem{Hewitt_Ross_1970}
{\sc E.~Hewitt, K.~A.~Ross}.
\newblock {A}bstract {H}armonic {A}nalysis. {V}ol.~II: {S}tructure and analysis of compact groups. {A}nalysis on compact abelian groups. 
\newblock {G}rundlehren {M}ath. {W}iss. 152, {S}pringer-{V}erlag, {B}erlin, 1970.

\bibitem{KS} 
{\sc J.~Kaad, M.~Skeide}.
\newblock {K}ernels of {H}ilbert ${C}^{*}$-module maps: a counterexample.
\newblock {\em J.~Operator Theory} {\bf 89}(2023), no.~2, 343--348.

\bibitem{KP} 
{\sc R.~V.~Kadison, G.~K.~Pedersen}.
\newblock {E}quivalence in operator algebras.
\newblock {\em Math.~Scand.} {\bf 27}(1970), 205--222.

\bibitem{Kasp_1980}
{\sc G.~G.~Kasparov}.
\newblock {H}ilbert {C}*-modules: the theorems of {S}tinespring and {V}oiculescu.
\newblock {\em  J. Operator Theory} {\bf 4}(1980), 133-150.

\bibitem{Lance}
{\sc E.~C.~Lance}.
\newblock {U}nitary operators on {H}ilbert {C}*-modules.
\newblock {\em Bull. London Math. Soc.} {\bf 26}(1994), 363--366.

\bibitem{Lance_95}
{\sc E.~C.~Lance}.
\newblock {H}ilbert {C}*-modules - a toolkit for operator algebraists.
\newblock {C}ambridge {U}niversity {P}ress, {C}ambridge, {E}ngland, {\em London Mathematical Society Lecture Note Series} {\bf 210}, 1995.

\bibitem{Li_10}
{\sc Hanfeng Li}.
\newblock {A} {H}ilbert {C}*-module admitting no frames.
\newblock {\em Bull. London Math. Soc.} {\bf 42}(2010), 388--394.

\bibitem{Lin_1992}
{\sc Huaxin Lin}.
\newblock {I}njective {H}ilbert {C}*-modules.
\newblock {\em Pacific J. Math.} {\bf 154}(1992), no.~1, 131--164.

\bibitem{Magajna}
{\sc B.~Magajna}.
\newblock {H}ilbert {C}*-modules in which all closed submodules are complemented.
\newblock {\em Proc. Amer. Math. Soc.} {\bf 125}(1997), 849–-852.

\bibitem{MTBook}
{\sc V.~M.~Manuilov, E.~V.~Troitsky}.
\newblock {\em {H}ilbert ${C}^{*}$-{M}odules}.
\newblock American Mathematical Society, Providence, R.I., 2005.

\bibitem{M}
{\sc V.~M.~Manuilov}.
\newblock On extendability of functionals on Hilbert C*-modules.
\newblock {\em Math. Nachr.} (2023), https://doi.org/10.1002/mana.202200471, preprint math.OA/2205.07089 at www.arxiv.org, https://doi.org/10.48550/arXiv.2205.07089.

\bibitem{Murphy} 
{\sc G.~J.~Murphy}.
\newblock ${C}^{*}$-{A}lgebras and {O}perator {T}heory.
\newblock Academic Press, Boston, 2004.

\bibitem{Paschke}
{\sc W.~L.~Paschke}.
\newblock {I}nner product modules over ${B}^{*}$-algebras.
\newblock {\em Trans. Amer. Math. Soc.} {\bf 182}(1973), 443--468.

\bibitem{Paschke1976TAMS}
{\sc W.~L.~Paschke}.
\newblock {I}nner product modules arising from compact automorphism groups of von
  {N}eumann algebras.
\newblock {\em Trans. Amer. Math. Soc.} {\bf 224}(1976), 87--102.

\bibitem{Pedersen_1984}
{\sc G.~K.~Pedersen}.
\newblock {M}ultipliers of {AW}*-algebras.
\newblock {\em Math. Z.} {\bf 187}(1984), 23-24.

\bibitem{Pedersen_1998}
{\sc G.~K.~Pedersen}.
\newblock {F}actorizations in {C}*-algebras.
\newblock {\em Expos. Math.} {\bf 16}(1998), no.~2, 145--156.

\bibitem{Pedersen}
{\sc G.~K.~Pedersen}.
\newblock {C}*-{A}lgebras and {T}heir {A}utomorphism {G}roups.
\newblock Academic Press (Elsevier), 2nd extended edition, London, 2018.

\bibitem{SaitoWright_2015}
{\sc Kazuyuki Sait{\^{o}}, J.~D.~Maitland Wright}.
\newblock {M}onotone {C}omplete {C}*-algebras and {G}eneric {D}ynamics.
\newblock Springer-Verlag, London, 2015.

\bibitem{Shalit_Skeide}
{\sc O.~M.~Shalit, M.~Skeide}.
\newblock {CP}-semigroups and dilations, subproduct systems and superproduct systems: the multi-parameter case and beyond.
\newblock {\em Dissertationes Mathematicae} {\bf 585}(2023), 1--233, doi: 10.4064/dm823-5-2022.

\bibitem{Shalit_Solel}
{\sc O.~M.~Shalit, B.~Solel}.
\newblock {S}ubproduct systems.
\newblock {\em Documenta Mathematica} {\bf 14}(2009), 801--868.

\bibitem{Solel}
{\sc B.~Solel}.
\newblock {I}sometries of {H}ilbert {C}*-modules.
\newblock {\em Trans. Amer. Math. Soc.} {\bf 353}(2001), no.~11, 4637--4660.

\bibitem{Wegge-Olsen}
{\sc N.~E.~Wegge-Olsen}.
\newblock {K}-theory and {C}*-algebras -- a friendly approach.
\newblock {O}xford {U}niversity {P}ress, {O}xford,  1993.


\end{thebibliography}
\end{document}